\theoremstyle{plain}
\newtheorem{thm}{Theorem}[section]
\newtheorem{lemma}[thm]{Lemma}
\newtheorem{cor}[thm]{Corollary}
\theoremstyle{definition}
\newtheorem{remark}[thm]{Remark}
\newtheorem{remarks}[thm]{Remarks}
\theoremstyle{remark}
\newtheorem{assumption}{Assumption}
\newcommand\mstrut{{\phantom{.}}}
\newcommand\newdot{{\kern.8pt\cdot\kern.8pt}}
\newcommand\bull{{\hbox{\bf .}}}
\font\sevenrm=cmr7
\newcommand\E{\mathbb{E}}
\newcommand\R{\mathbb{R}}
\newcommand\SF{\mathscr F}
\def\mathpal#1{\mathop{\mathchoice{\text{\rm #1}}%
   {\text{\rm #1}}{\text{\rm #1}}%
   {\text{\rm #1}}}\nolimits}
\newcommand\Hess{\mathpal{Hess}}
\newcommand\Ric{\mathpal{Ric}}
\newcommand\id{\mathpal{id}}
\def\mequal{\mathrel{\mathpalette\@mvereq{\hbox{\sevenrm m}}}}
\def\@mvereq#1#2{\lower.5\p@\vbox{\baselineskip\z@skip\lineskip1.5\p@
    \ialign{$\m@th#1\hfil##\hfil$\crcr#2\crcr=\crcr}}}
\def\a{\alpha}
\def\b{\beta}
\def\d{\delta}
\title[Li-Yau Type Gradient Estimates]
{Li-Yau Type Gradient Estimates and Harnack Inequalities by Stochastic Analysis}
\author[M. Arnaudon and A. Thalmaier]{Marc Arnaudon and Anton Thalmaier}
\address{
{\rm Marc Arnaudon}\\
  Laboratoire de Math\'ematiques et Applications (CNRS: UMR6086),\\
  Universit\'e de Poitiers, T\'el\'eport 2 -- BP 30179,\\ 
  F--86962 Futuroscope Chasseneuil Cedex, France}
\email{arnaudon@math.univ-poitiers.fr}
\address{
{\rm Anton Thalmaier}\\
  Unit\'e de Recherche en Math\'ematique,
  Universit\'e du Luxembourg,\\ 
  162a, avenue de la
  Fa\"{\i}encerie, L--1511 Luxembourg\\ 
  Grand-Duchy of Luxembourg}
\email{anton.thalmaier@uni.lu}
\subjclass[2000]{58J65, 60H30}
\keywords{Heat equation, Ricci curvature, Li-Yau inequality, Harnack inequality, 
gradient bound, Brownian motion}
\begin{document}

\begin{abstract}
In this paper we use methods from Stochastic Analysis to establish 
Li-Yau type estimates for positive solutions of the heat equation. 
In particular, we want to emphasize that Stochastic Analysis provides natural tools to 
derive local estimates in the sense that the gradient bound at given point depends only 
on universal constants and the geometry of the Riemannian manifold locally about this point. 
\end{abstract}

\maketitle

%%%%%%%%%%%%%%%%%%%%%%%%%%%%%%%%%%%%%%%%%%%%%%%%%%%%%%%%%%%%%%%%%%%%%%%%%%%%
\section{Introduction}
\label{Section1}
\setcounter{equation}0

The effect of curvature on the behaviour of the heat flow on a 
Riemannian manifold is a classical problem. 
Ricci curvature manifests itself most directly in gradient formulas 
for solutions of heat equation.

Gradient estimates for positive solutions of the heat equation serve as 
infinitesimal versions of Harnack inequalities: 
by integrating along curves on the manifold local gradient estimates may be 
turned into local Harnack type inequalities. 

Solutions to the heat equation
\begin{equation}\label{Eq:heat_equation0}
\frac\partial{\partial t}u=\frac12\,\Delta u
\end{equation}
on a Riemannian manifold $M$ are well understood in probabilistic terms. 
For instance, if $u=u(x,t)$ denotes the minimal solution to \eqref{Eq:heat_equation0}, 
then a straightforward calculation using It\^o's calculus leads to the 
stochastic representation of $u$ as 
\begin{equation}\label{Eq:heat_equationStoch}
u(x,t)=\E\left[1_{\{t<\zeta(x)\}}\,f(X_t(x))\right]
\end{equation}
where $f=u(\newdot,0)$ is the initial condition in \eqref{Eq:heat_equation0}, 
$X_\bull(x)$ denotes a Brownian motion on $M$, starting from $x$ 
at time~$0$, and $\zeta(x)$ its lifetime.

It is a remarkable fact that exact stochastic representation formulas for the derivative of 
solutions to the heat equation can be given, expressing the gradient $\nabla u$ of $u$
in terms of Ricci curvature. 

The following typical example for such a 
Bismut type derivative formula is taken from~\cite{Thalmaier-Wang:98}. 

\begin{thm}
[Stochastic representation of the gradient]\label{Bismut} Let 
$D$ be a relatively compact open
domain in a complete Riemannian manifold~$M$, and let $u=u(x,t)$ be a solution of the heat equation  
\eqref{Eq:heat_equation0} on $D\times{[0,T]}$ which is continuous on $\bar D\times{[0,T]}$.
Then, for any
$v\in T_{x}M$ and $x\in D$,
\begin{equation}
\langle\nabla u(\newdot,T)_{x},v\rangle=-\E\left[u(X_{\tau}(x),T-\tau)\int_{0}^{\tau}\left\langle Q_{s}
\,\dot{\ell}_{s},dB_{s}\right\rangle \right] , \label{Bismut1}
\end{equation}
where:
\begin{enumerate}
\item[\rm{(1)}] $X\equiv X_\bull(x)$ is a Brownian motion on $M$, starting at $x$, and 
$\tau=\tau(x)\wedge T$ where 
\begin{equation*}
\tau(x)=\inf\{t>0:X_{t}(x)\not \in D\}
\end{equation*}
is the first exit time from $D$; the stochastic integral is taken with respect to
the Brownian motion $B$ in $T_{x}M$, related to $X$ by the Stratonovich
equation $dB_{t}={/\!/_{t}^{-1}}\delta X_{t}$, where $\,/\!/_{t}%
^{\mstrut}\colon T_{x}M\rightarrow T_{X_{t}}M$ denotes the stochastic parallel
transport along $X$.

\item[\rm{(2)}] The process $Q$ takes values in the group of linear
automorphisms of $T_{x}M$ and is defined by the pathwise covariant ordinary
differential equation\/,
\begin{equation*}
dQ_{t}= -{\textstyle{\frac{1}{2}}}\, \Ric_{/\!/_{t}^{\mstrut}}(Q_{t}^{\mstrut})\,dt,\quad Q_{0}=\id_{T_{x}M},
\end{equation*}
where $\Ric_{/\!/_{t}^{\mstrut}}={/\!/_{t}^{-1}}\circ{\Ric_{X_{t}}^{\sharp}%
}\circ{/\!/_{t}^{\mstrut}}$ (a linear transformation of $T_{x}M$), and
$\langle\Ric_{z}^{\sharp}u,w\rangle=\Ric_{z}(u,w)$ for any $u,w\in T_{z}M$,
$z\in M$.

\item[\rm{(3)}] Finally, $\ell_{t}$ may be any adapted finite energy
process taking values in $T_{x}M$ such that $\ell_{0}=v$, $\ell_{\tau}=0$ and
\[
\bigl(\int_{0}^{\tau}|\dot{\ell}_{t}|^{2}\,dt\bigr){}^{1/2}\in
L^{1+\varepsilon}\quad\hbox{for some $\varepsilon>0$}.
\]
\end{enumerate}
\end{thm}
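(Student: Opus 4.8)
The plan is to exhibit an explicit local martingale $(Y_s)_{s\in[0,\tau)}$ whose value at $s=0$ equals $-\langle\nabla u(\newdot,T)_x,v\rangle$ and whose value at $s=\tau$ is the random variable appearing under the expectation in \eqref{Bismut1}; taking expectations then yields the formula.

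First I would assemble the ingredients from It\^o calculus. Since $u$ solves \eqref{Eq:heat_equation0} and $X$ is a Brownian motion stopped at the bounded time $\tau$, the process $N_s:=u(X_s(x),T-s)$ is a continuous martingale on $[0,\tau]$, and it is bounded because $u$ is continuous on the compact set $\bar D\times[0,T]$. It\^o's formula gives $dN_s=\langle\Theta_s,dB_s\rangle$ with $\Theta_s:={/\!/_s^{-1}}(\nabla u)(X_s(x),T-s)\in T_xM$, which is well defined on $\{s<\tau\}$ by interior regularity of $u$. Writing $K_s:=\int_0^s\langle Q_r\dot\ell_r,dB_r\rangle$, the It\^o product rule gives $d(N_sK_s)=N_s\,dK_s+K_s\,dN_s+\langle\Theta_s,Q_s\dot\ell_s\rangle\,ds$. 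The point of the construction of $Q$ enters next: differentiating \eqref{Eq:heat_equation0} and using the Weitzenb\"ock identity $\Delta^{\mathrm{conn}}(\nabla u)=\nabla(\Delta u)+\Ric^{\sharp}(\nabla u)$, one checks that $\Theta$ satisfies a covariant equation
\[
d\Theta_s=dM_s+{\textstyle\frac12}\,\Ric_{/\!/_s^{\mstrut}}(\Theta_s)\,ds
\]
for some $T_xM$-valued local martingale $M$; since $Q$ solves $dQ_s=-\frac12\,\Ric_{/\!/_s^{\mstrut}}(Q_s)\,ds$ and $\Ric_{/\!/_s^{\mstrut}}$ is symmetric, the two Ricci drifts cancel in the pairing with $Q_s\ell_s$ (recall that $\ell$ has finite energy, so $d(Q_s\ell_s)=-\frac12\,\Ric_{/\!/_s^{\mstrut}}(Q_s\ell_s)\,ds+Q_s\dot\ell_s\,ds$), leaving
\[
d\langle\Theta_s,Q_s\ell_s\rangle=\langle dM_s,Q_s\ell_s\rangle+\langle\Theta_s,Q_s\dot\ell_s\rangle\,ds .
\]

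Subtracting this from the expression for $d(N_sK_s)$, the $\langle\Theta_s,Q_s\dot\ell_s\rangle\,ds$ terms cancel, so
\[
Y_s:=N_sK_s-\langle\Theta_s,Q_s\ell_s\rangle
\]
is a local martingale on $[0,\tau)$. From $K_0=0$, $Q_0=\id$, $/\!/_0=\id$ and $\ell_0=v$ we read off $Y_0=-\langle\nabla u(\newdot,T)_x,v\rangle$, while $\ell_\tau=0$ gives $Y_\tau=u(X_\tau,T-\tau)\int_0^\tau\langle Q_s\dot\ell_s,dB_s\rangle$. Hence, once $Y$ is known to be a genuine martingale on $[0,\tau]$, the identity $\E[Y_0]=\E[Y_\tau]$ is exactly \eqref{Bismut1}.

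The main obstacle is precisely this last point. When $\tau=\tau(x)<T$, the endpoint $X_\tau$ lies on $\partial D$, where $\nabla u$ — and hence $\Theta$ and the bracket of $M$ — need not remain bounded, so uniform integrability of $Y$ up to time $\tau$ is not automatic. I would treat this by localization: fix an exhaustion $D_n\Subset D$, stop at $\tau_n:=\tau\wedge\inf\{t>0:X_t(x)\notin D_n\}$, on which all the geometric quantities are bounded so that $Y$ is a bona fide martingale and $Y_0=\E[Y_{\tau_n}]$, and then let $n\to\infty$. This limit is the technical heart of the proof, and it is here that the hypotheses on $\ell$ — finite energy together with $\bigl(\int_0^\tau|\dot\ell_t|^2\,dt\bigr)^{1/2}\in L^{1+\varepsilon}$ — are needed: via the Burkholder--Davis--Gundy and H\"older inequalities, combined with the boundedness of $N$ and of $Q$ on $[0,T]$ (the latter from the bound on $\Ric$ over the compact set $\bar D$) and with $\E\int_0^\tau|\Theta_s|^2\,ds<\infty$ (the It\^o isometry applied to the bounded martingale $N$), they furnish the uniform integrability needed to pass to the limit in $\E[N_{\tau_n}K_{\tau_n}]$ and to force the boundary contribution $\E\langle\Theta_{\tau_n},Q_{\tau_n}\ell_{\tau_n}\rangle$ to vanish. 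The remaining It\^o and Weitzenb\"ock computations indicated above are routine.
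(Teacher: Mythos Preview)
The paper does not prove this theorem; it is quoted from \cite{Thalmaier-Wang:98} as background, so there is no in-paper argument to compare against. Your sketch is precisely the standard proof found in that reference: build the local martingale $Y_s=N_sK_s-\langle\Theta_s,Q_s\ell_s\rangle$ via the Weitzenb\"ock identity and the defining equation for $Q$, identify its endpoint values, and upgrade from local to true martingale by exhausting $D$ and using BDG/H\"older together with the $L^{1+\varepsilon}$ hypothesis on $\bigl(\int_0^\tau|\dot\ell_t|^2\,dt\bigr)^{1/2}$. The technical ingredients you single out---boundedness of $N$ and of $Q$ on $[0,\tau]$, and the It\^o-isometry bound $\E\int_0^\tau|\Theta_s|^2\,ds<\infty$ coming from the bounded martingale $N$---are exactly what is required, and the argument is correct.
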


Formula \eqref{Bismut1} is valid for any solution $u$ of the heat equation and does not require positivity of $u$.

\begin{remark}Formula \eqref{Bismut1} is easily adapted to more specific situations, for instance:

(i) Let $u=u(x,t)$ be a solution of the heat equation  
\eqref{Eq:heat_equation0} on $D\times{[0,T]}$ such that $u|_{t=0}=f$ and $u(\newdot,t)|\partial D=f|\partial D$. 
Then
\begin{equation*}
\langle\nabla u(\newdot,T)_{x},v\rangle=-\E\left[ {f(X_{\tau}(x))}\int_{0}^{\tau}\left\langle Q_{s}
\,\dot{\ell}_{s},dB_{s}\right\rangle \right] ,\quad \tau=\tau(x)\wedge T\,.
\end{equation*}

(ii) Let $u=u(x,t)$ be a solution of the heat equation  
\eqref{Eq:heat_equation0} on $D\times{[0,T]}$ such that $u|_{t=0}=f$ and $u(\newdot,t)|\partial D=0$. Then
\begin{equation*}
\langle\nabla u(\newdot,T)_{x},v\rangle
=-\E\left[ f(X_T(x))\,1_{\{T<\tau(x)\}}\int_{0}^{\tau(x)\wedge T}\left\langle Q_{s}
\,\dot{\ell}_{s},dB_{s}\right\rangle \right] \,.
\end{equation*}
\end{remark}

Such formulas are interesting by several means. 
For instance, on a complete Riemannian manifold $M$, 
starting from the minimal solution to the heat equation
\begin{equation}\label{Eq:heat_equationStoch1}
u(x,T)=\E\left[1_{\{T<\zeta(x)\}}\,f(X_T(x))\right]
\end{equation}
with bounded initial conditions $u(\newdot,0)=f$, since for arbitrarily small 
$T>0$ Brownian motion explores the whole manifold $M$, we observe 
that the global structure of $M$ enters in formula \eqref{Eq:heat_equationStoch1}; 
lower Ricci bounds may fail and thus ``Brownian motion may travel arbitrarily fast''. 
Nevertheless, looking at the formula for the gradient $\nabla u(\newdot,T)_{x}$ and taking into account 
that $$u(X_{\tau}(x),T-\tau)=\E^{\SF_\tau}\left[1_{\{T<\zeta(x)\}}\,f(X_T(x))\right],$$
we see that Eq.~\eqref{Bismut1} reads as
\begin{equation*}
\langle\nabla u(\newdot,T)_{x},v\rangle=-\E\left[1_{\{T<\zeta(x)\}}\,f(X_T(x))\int_{0}^{\tau}\left\langle Q_{s}
\,\dot{\ell}_{s},dB_{s}\right\rangle \right]  
\end{equation*}
where $\tau=\tau(x)\wedge T$ and $\tau(x)$ the first exit time of $X_\bull(x)$ 
from an arbitrarily small chosen neighbourhood of~$x$. 
In other words, as far as the gradient at $(x,T)$ is concerned, Ricci curvature of $M$  matters 
only locally about the 
point~$x$.

No derivative of the heat equation appears in the right-hand side of Eq.~\eqref{Eq:heat_equationStoch1}, 
thus gradient formulas are well suitable for estimates, see \cite{Thalmaier-Wang:98}. 
Such inequalities easily allow to bound $\nabla u$ in terms of some uniform 
norm of $u$.

For positive solutions of the heat equation however one wants to do better: typically one seeks 
for pointwise estimates which allow (modulo additional terms if necessary) to control 
$\nabla u(\newdot,T)_{x}$ by $u(x,T)$. 
 
\begin{thm}
[Classical Li-Yau estimate \cite{Li-Yau:86}]\label{Li-Yau_Class} 
Let $M$ be complete Riemannian manifold and assume that $\Ric\geq-k$ where $k\geq0$.
Let $u$ be a strictly positive solution of  
$$\frac\partial{\partial t}u=\frac12 \Delta u\quad\text{on}\quad M\times\R_+$$ 
and let $a>1$. Then
\begin{equation}\label{Eq:Li-Yau_Class}
\left(\frac{|\nabla u|}{u}\right)^2 (x,T)-a\,\frac{\Delta u}{u}(x,T)  \leq c(n,a)
\left[k+\frac1T\right].
\end{equation}
If $\Ric\geq0$, i.e. $k=0$, then the choice $a=1$ is possible.
\end{thm}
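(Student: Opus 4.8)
The plan is to follow the Li--Yau parabolic maximum-principle argument, adapted to the generator $\tfrac12\Delta$ and, since $M$ is only assumed complete, localised by a spatial cut-off.

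First I would put $f=\log u$. As $u>0$ solves $\partial_t u=\tfrac12\Delta u$, the function $f$ solves $\partial_t f=\tfrac12\Delta f+\tfrac12|\nabla f|^2$, and using $\Delta u/u=\Delta f+|\nabla f|^2$ the quantity to be controlled becomes
\[
w:=\Big(\frac{|\nabla u|}{u}\Big)^2-a\,\frac{\Delta u}{u}=(1-a)\,|\nabla f|^2-a\,\Delta f .
\]
Combining the Bochner formula $\tfrac12\Delta|\nabla f|^2=|\Hess f|^2+\langle\nabla f,\nabla\Delta f\rangle+\Ric(\nabla f,\nabla f)$ with the commutation of $\Delta$ and $\partial_t$, the hypothesis $\Ric\ge-k$, and $|\Hess f|^2\ge\tfrac1n(\Delta f)^2$ yields a parabolic differential inequality
\[
\Big(\tfrac12\Delta-\partial_t\Big)w\;\ge\;\frac1n(\Delta f)^2-\langle\nabla f,\nabla w\rangle-k\,|\nabla f|^2 .
\]
Since $\Delta f=-\tfrac1a\bigl(w+(a-1)|\nabla f|^2\bigr)$, the term $\tfrac1n(\Delta f)^2$ splits into a genuine quadratic term $\tfrac1{na^2}w^2$, a cross term $\tfrac{2(a-1)}{na^2}\,w\,|\nabla f|^2$, and a positive $|\nabla f|^4$ term; for $a>1$ the last one absorbs $k|\nabla f|^2$ by Young's inequality at the cost of an additive term controlled by $k^2$, and its coefficient degenerates exactly when $a=1$, which is why that case is admissible only if $k=0$.

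Next, fix $R>0$ and a cut-off $\varphi$ equal to $1$ on the geodesic ball $B(x,R)$, supported in $B(x,2R)$, with $|\nabla\varphi|^2/\varphi\le C/R^2$ and $\Delta\varphi\ge-C(1+\sqrt{k}\,R)/R^2$ — the Laplacian bound being where $\Ric\ge-k$ re-enters, via comparison geometry. Consider $F:=t\,\varphi\,w$ on $\bar B(x,2R)\times[0,T]$ and let $(y_0,t_0)$ maximise $F$. If $F(y_0,t_0)\le0$ the claim is immediate; otherwise $t_0>0$, $y_0$ is interior, so $\nabla F=0$, $\Delta F\le0$, $\partial_t F\ge0$ there. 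Substituting $\varphi\nabla w=-w\nabla\varphi$ and the cut-off bounds into $(\tfrac12\Delta-\partial_t)F\le0$ turns it, after repeated use of Young's inequality, into a quadratic inequality for $F(y_0,t_0)$ with coefficients depending only on $n$ and $a$, whose solution is $F(y_0,t_0)\le c(n,a)\,(1+t_0 k+t_0R^{-2})$. Hence $t\,w(y,t)\le c(n,a)(1+Tk+TR^{-2})$ on $B(x,R)\times[0,T]$; putting $y=x$, $t=T$ and letting $R\to\infty$ — permissible because every constant was uniform in $R$ — gives the stated inequality.

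The main obstacle I anticipate is exactly the constant bookkeeping of the last two paragraphs: one must force the drift term $\langle\nabla f,\nabla w\rangle$, the cut-off errors coming from $\nabla\varphi$ and $\Delta\varphi$ (which after substitution look cubic in $|\nabla f|$), and the curvature term $k|\nabla f|^2$ all into the single good quantity $\tfrac1{na^2}w^2$ together with the $|\nabla f|^4$ and $w|\nabla f|^2$ terms, in a way that keeps the coefficient of $F^2$ positive for every $a>1$ and degrades continuously as $a\downarrow1$; this is where the explicit $c(n,a)$ is manufactured and where the sharp case $a=1$, $k=0$ is won or lost. In keeping with the stochastic point of view of this paper, the same differential inequality can instead be run along Brownian motion: applying It\^o's formula to $s\mapsto(T-s)\,w(X_s,T-s)$ up to the first exit time from $B(x,2R)$ and using the drift bound above in place of the maximum-point information leads, after taking expectations and optimising, to the same conclusion, the algebraic core being unchanged.
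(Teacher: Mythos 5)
Your argument is the classical Li--Yau proof, adapted to the generator $\tfrac12\Delta$: Bochner applied to $f=\log u$, the trace inequality $|\Hess f|^2\ge\tfrac1n(\Delta f)^2$, a space--time cut-off $F=t\varphi w$ with $\Delta\varphi$ controlled by Laplacian comparison, and evaluation of $\bigl(\tfrac12\Delta-\partial_t\bigr)F\le0$ at an interior maximum. The plan is correct (modulo the usual Calabi-type regularisation at the cut locus, which you leave implicit), and it is precisely the proof the paper attributes to \cite{Li-Yau:86}; the paper does not re-prove Theorem~\ref{Li-Yau_Class} by a maximum principle. Its own contribution is a stochastic re-derivation of the same family of bounds. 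The drift inequality you write down is, after the change of variables $q=\tfrac{|\nabla u|^2}u-\Delta u$, the paper's Eq.~\eqref{Eq:1.4b}; rather than evaluating it at a spatial maximum, the paper runs it along Brownian motion $X_t$. In Section~\ref{Section3} this yields the integrated submartingale inequality \eqref{Eq:ht_qt1} with a free time-profile $h_t$, and optimisation in $h_t$ gives the global $\Ric\ge0$, $a=1$ case (Theorem~\ref{thm:Li_Yau_Ric0}). In Section~\ref{Section4} a compensated process $S_{\alpha,t}=h_t\bigl(q_t-\alpha^{-1}\Delta u_t-nu_tY_t\bigr)$, built from the space--time penalty $Y_t=C_1(T-t)^{-1}+C_2\varphi^{-2}(X_t)+\alpha k$, is shown to have nonnegative drift on $\{S_{\alpha,t}\ge0\}$ and to diverge to $-\infty$ at the exit time, so an optional-sampling argument at the first zero of $S_{\alpha,\cdot}$ forces $S_{\alpha,0}\le0$ (Theorem~\ref{T1}). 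Your cut-off $\varphi$ plays exactly the role of the penalty $\varphi^{-2}(X_t)$ there, but the spatial maximum principle is replaced by a stopping-time argument. Your route gives the full range $a>1$ with standard PDE tools; the paper's route stays entirely inside It\^o calculus, makes the locality of the Ricci dependence manifest (only the part of $M$ visited before the exit time enters), and fits the submartingale machinery used for the entropy- and Hamilton-type estimates elsewhere in the paper, at the price that the explicit local version (Theorem~\ref{C1}) restricts to $a\in{]1,2[}$. Your closing remark correctly identifies this stochastic variant as the one the paper actually develops.
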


It is a surprising fact which has been noticed by many people that 
no straightforward way to pass from Bismut type derivative formulas to Li-Yau type gradient 
estimates seems to exist. 

Li-Yau type inequalities for positive solutions $u$ of the heat equation 
aim at estimating $\nabla\log u$ rather than $\nabla u$. 
This adds an interesting non-linearity to the problem which is better to deal with in terms of 
submartingales and Bismut type inequalities than in terms of martingales 
which are the underlying concept for Bismut formulas. 
This point of view has been worked out in~\cite{ADT:2007} for local estimates 
in the elliptic case of positive harmonic functions. Such estimates in global
form, i.e., for positive harmonic functions on Riemannian manifolds, are due to
S.T.\ Yau~\cite{Yau:75}; local versions have been established by Cheng and
Yau~\cite{Cheng-Yau:75}.

In this paper we pursue the approach via submartingales to study the parabolic case.
Even if it is meanwhile quite standard to obtain Li-Yau type estimates in global form 
via analytic methods, local versions require often completely new arguments 
\cite{Bakry-Ledoux:2006, Ni:2004, Souplet-Zhang:06, Zhang:06}. 

In this paper we derive various submartingales which lead to the wanted estimates 
in a surprisingly simple way. 

%%%%%%%%%%%%%%%%%%%%%%%%%%%%%%%%%%%%%%%%%%%%%%%%%%%%%%%%%%%%%%%%%%%%%%%%%%%%
\section{Basic formulas related to positive solutions of the heat equation 
and some elementary submartingales}
\label{Section2}
\setcounter{equation}0

The following formulas for solutions of the heat equation on a Riemannian manifold 
depend on the fact that the solutions are strictly positive. 

\begin{lemma}\label{Lemma:1.1}
Let $M$ be a Riemannian manifold (not necessarily complete) and let $u=u(x,t)$ be a positive 
solution of the heat equation 
\begin{equation}\label{Eq:heat_equation}
\frac\partial{\partial t}u=\frac12\,\Delta u
\end{equation}
on $M\times[0,T]$. Then the following equalities hold:
\begin{align}
\label{Eq:1.1a}
&\left(\frac12\Delta-{\partial_t}\right)\,(\log u)=-\frac12\frac{\vert\nabla u\vert^2}{u^2},\\
\label{Eq:1.1b}
&\left(\frac12\Delta-{\partial_t}\right)\,(u\log u)=\frac12\frac{\vert\nabla u\vert^2}{u},\\
\label{Eq:1.1c}
&\left(\frac12\Delta-{\partial_t}\right)\,\frac{\vert\nabla u\vert^2}{u}
=\frac1u\left|\Hess u-\frac{\nabla u\otimes\nabla u}u\right|^2+\frac{\Ric(\nabla u,\nabla u)}u.
\end{align}
\end{lemma}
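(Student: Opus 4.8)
The plan is to verify each of the three identities by direct computation, using two standard facts: the elliptic heat operator $\frac12\Delta-\partial_t$ annihilates $u$ itself (this is just the heat equation), and the Bochner--Weitzenb\"ock formula
\begin{equation*}
\frac12\Delta|\nabla u|^2=|\Hess u|^2+\langle\nabla u,\nabla(\tfrac12\Delta u)\rangle+\Ric(\nabla u,\nabla u).
\end{equation*}
Throughout I would abbreviate $L:=\frac12\Delta-\partial_t$ and note the product-type rule $L(fg)=f\,Lg+g\,Lf+\langle\nabla f,\nabla g\rangle$ for the \emph{spatial} gradient, which follows since the time derivative obeys the ordinary Leibniz rule and $\Delta(fg)=f\Delta g+g\Delta f+2\langle\nabla f,\nabla g\rangle$.

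For \eqref{Eq:1.1a}: write $\log u$ and compute $\Delta\log u=\frac{\Delta u}{u}-\frac{|\nabla u|^2}{u^2}$ and $\partial_t\log u=\frac{\partial_t u}{u}$. Subtracting and using $\frac12\Delta u=\partial_t u$, the terms involving $\Delta u$ and $\partial_t u$ cancel, leaving $-\frac12|\nabla u|^2/u^2$. For \eqref{Eq:1.1b}: apply the product rule to $f=u$, $g=\log u$. Then $L(u\log u)=u\,L(\log u)+\log u\,Lu+\langle\nabla u,\nabla\log u\rangle$. The middle term vanishes since $Lu=0$; the first term is $u\cdot(-\frac12|\nabla u|^2/u^2)$ by \eqref{Eq:1.1a}; and $\langle\nabla u,\nabla\log u\rangle=|\nabla u|^2/u$. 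Adding gives $\frac12|\nabla u|^2/u$. This shows \eqref{Eq:1.1a} should be proved first and then reused.

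For \eqref{Eq:1.1c}: here is the substantive computation. Write $|\nabla u|^2/u$ as the product $f\cdot g$ with $f=|\nabla u|^2$ and $g=1/u$, or more cleanly combine $L(|\nabla u|^2)$ and $L(1/u)$ via the product rule. One has $L(1/u)=\frac12\Delta(1/u)-\partial_t(1/u)=\frac{|\nabla u|^2}{u^3}-\frac{\frac12\Delta u}{u^2}+\frac{\partial_t u}{u^2}=\frac{|\nabla u|^2}{u^3}$, again using the heat equation. For $L(|\nabla u|^2)=\frac12\Delta|\nabla u|^2-\partial_t|\nabla u|^2$: the Bochner formula gives the $\Delta$ part, while $\partial_t|\nabla u|^2=2\langle\nabla u,\nabla\partial_t u\rangle=\langle\nabla u,\nabla\Delta u\rangle$; so $L(|\nabla u|^2)=|\Hess u|^2+\Ric(\nabla u,\nabla u)$, the gradient-of-$\Delta u$ terms cancelling. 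Then $L(|\nabla u|^2/u)=\frac1u\bigl(|\Hess u|^2+\Ric(\nabla u,\nabla u)\bigr)+|\nabla u|^2\cdot\frac{|\nabla u|^2}{u^3}+\langle\nabla|\nabla u|^2,\nabla(1/u)\rangle$. The cross term is $\langle\nabla|\nabla u|^2,\nabla(1/u)\rangle=-\frac1{u^2}\langle\nabla|\nabla u|^2,\nabla u\rangle=-\frac{2}{u^2}\Hess u(\nabla u,\nabla u)$. It remains to recognize that
\begin{equation*}
\frac1u|\Hess u|^2-\frac{2}{u^2}\Hess u(\nabla u,\nabla u)+\frac{|\nabla u|^4}{u^3}=\frac1u\Bigl|\Hess u-\frac{\nabla u\otimes\nabla u}{u}\Bigr|^2,
\end{equation*}
which is just the expansion of the square of a symmetric $2$-tensor, using $|\nabla u\otimes\nabla u|^2=|\nabla u|^4$ and $\langle\Hess u,\nabla u\otimes\nabla u\rangle=\Hess u(\nabla u,\nabla u)$. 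Adding the remaining $\Ric(\nabla u,\nabla u)/u$ term gives \eqref{Eq:1.1c}.

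The only real obstacle is bookkeeping: one must be careful with the normalization (the $\frac12$ in front of $\Delta$) so that the heat equation is used in the correct form, and with the identification of the completed-square term in \eqref{Eq:1.1c}, where the sign and the factor of $2$ in the cross term must match. There is no geometric subtlety beyond invoking Bochner's formula, which is standard and may be cited; completeness of $M$ is not needed since everything is a pointwise identity.
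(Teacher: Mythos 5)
Your proposal is correct and supplies exactly the direct computation that the paper's proof omits (the paper simply asserts ``All three equalities are easily checked by direct calculation''), so you are taking the same approach. One small slip: the Bochner--Weitzenb\"ock formula should read $\tfrac12\Delta|\nabla u|^2=|\Hess u|^2+\langle\nabla u,\nabla\Delta u\rangle+\Ric(\nabla u,\nabla u)$, without the spurious factor $\tfrac12$ in the middle term; since you then correctly compute $\partial_t|\nabla u|^2=2\langle\nabla u,\nabla\partial_t u\rangle=\langle\nabla u,\nabla\Delta u\rangle$ and the two cancel, this is just a typo in the stated identity and does not affect the argument.
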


\begin{proof} All three equalities are easily checked by direct calculation. 
\end{proof}

Eq.~\eqref{Eq:1.1c} in Lemma \ref{Lemma:1.1} gives raise to some inequalities 
frequently used in the sequel and crucial for our approach.
Most of our results are based on the following observation.

\begin{cor}\label{cor:1.1} 
Let $M$ be a Riemannian manifold of dimension $n$ (not necessarily complete) and let $u=u(x,t)$ be a positive 
solution of the heat equation \eqref{Eq:heat_equation}. Then we have:
\begin{equation}\label{Eq:1.4a}
\left(\frac12\Delta-{\partial_t}\right)\,\frac{\vert\nabla u\vert^2}{u}
\geq\frac1{nu}\left(\Delta u-\frac{\vert\nabla u\vert^2}{u}\right)^2+\frac{\Ric(\nabla u,\nabla u)}u.\end{equation}
If $\Ric\geq-k$ on $M$ for some $k\geq0$, then 
\begin{equation}\label{Eq:1.4b}
\left(\frac12\Delta-{\partial_t}\right)\,\frac{\vert\nabla u\vert^2}{u}
\geq\frac1{nu}\left(\Delta u-\frac{\vert\nabla u\vert^2}{u}\right)^2-k\,\frac{\vert\nabla u\vert^2}{u},\end{equation}
and in particular,
\begin{equation}\label{Eq:1.5}
\left(\frac12\Delta-{\partial_t}\right)\,\frac{\vert\nabla u\vert^2}{u}
\geq-k\,\frac{\vert\nabla u\vert^2}{u}.\end{equation}
\end{cor}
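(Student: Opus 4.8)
The plan is to derive all three inequalities directly from the exact identity \eqref{Eq:1.1c} in Lemma~\ref{Lemma:1.1}, so the only real work is a pointwise linear-algebra estimate on the Hessian term. First I would recall the standard inequality relating the Hilbert--Schmidt norm of a symmetric endomorphism to its trace: for any symmetric bilinear form $A$ on an $n$-dimensional inner product space one has $|A|^2\geq\frac1n(\operatorname{tr}A)^2$. I apply this with $A=\Hess u-\dfrac{\nabla u\otimes\nabla u}{u}$. Since $\operatorname{tr}(\Hess u)=\Delta u$ and $\operatorname{tr}\!\big(\frac{\nabla u\otimes\nabla u}{u}\big)=\dfrac{|\nabla u|^2}{u}$, we get
\begin{equation*}
\left|\Hess u-\frac{\nabla u\otimes\nabla u}{u}\right|^2\geq\frac1n\left(\Delta u-\frac{|\nabla u|^2}{u}\right)^2.
\end{equation*}
Dividing by $u>0$ (here the strict positivity is used, and it is exactly what makes all of \eqref{Eq:1.1a}--\eqref{Eq:1.1c} meaningful) and substituting into \eqref{Eq:1.1c} yields \eqref{Eq:1.4a} immediately.

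For \eqref{Eq:1.4b} I would simply invoke the curvature hypothesis $\Ric\geq-k$: applied to the vector $\nabla u$ this gives $\Ric(\nabla u,\nabla u)\geq-k\,|\nabla u|^2$ pointwise, hence $\dfrac{\Ric(\nabla u,\nabla u)}{u}\geq-k\,\dfrac{|\nabla u|^2}{u}$ since $u>0$. Inserting this lower bound for the Ricci term in \eqref{Eq:1.4a} gives \eqref{Eq:1.4b}. Finally, \eqref{Eq:1.5} follows from \eqref{Eq:1.4b} by discarding the nonnegative square term $\dfrac1{nu}\big(\Delta u-\frac{|\nabla u|^2}{u}\big)^2\geq0$ (again using $u>0$).

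There is no genuine obstacle here: the statement is a formal consequence of the exact identity already established in Lemma~\ref{Lemma:1.1}, and the only nontrivial ingredient is the elementary trace inequality $|A|^2\geq\frac1n(\operatorname{tr}A)^2$, which is itself just Cauchy--Schwarz applied to the identity endomorphism in Hilbert--Schmidt inner product, or equivalently the convexity of $t\mapsto t^2$ applied to the eigenvalues of $A$. The one point worth a sentence of care is that the endomorphism $A=\Hess u-\frac{\nabla u\otimes\nabla u}{u}$ is indeed symmetric (both $\Hess u$ and $\nabla u\otimes\nabla u$ are), so that the trace inequality is legitimately applicable; everything else is substitution.
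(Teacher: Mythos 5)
Your proof is correct and is exactly the argument the paper has in mind when it says Corollary~\ref{cor:1.1} is ``a direct consequence of Eq.~\eqref{Eq:1.1c}'': apply the trace inequality $|A|^2\geq\frac1n(\operatorname{tr}A)^2$ to the symmetric tensor $A=\Hess u-\frac{\nabla u\otimes\nabla u}{u}$, then use $\Ric\geq-k$ and drop the nonnegative square term. No further comment is needed.
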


\begin{proof} This is a direct consequence of Eq.~\eqref{Eq:1.1c}.
\end{proof}

\begin{lemma}\label{Lemma:2.1}
Let $M$ be a Riemannian manifold (not necessarily complete) and let $u(x,t)=P_tf(x)$ be a positive 
solution of the heat equation 
\begin{equation}\label{Eq:heat_equation_again}
\frac\partial{\partial t}u=\frac12\,\Delta u
\end{equation}
on $M\times[0,T]$.
For any Brownian motion $X$ on~$M$, the process
\begin{align}
m^1_t:=\frac{|\nabla P_{T-t}f|^2}{P_{T-t}f}(X_t)\,\exp\left\{-\int_0^t\underline{\Ric}(X_r)\,dr\right\},
\label{Eq:2.5a}\end{align}
where $\underline{\Ric}(x)$ denotes the smallest eigenvalue of the Ricci curvature at the point $x$, 
is a local submartingale (up to its natural lifetime).
Furthermore
\begin{align}
m^2_t&:=\big(\log P_{T-t}f\big)(X_t)
+\frac12\int_0^t\left|\frac{\nabla P_{T-s}f}{P_{T-s}f}\right|^2(X_s)\,ds\\
m^3_t&:=\big(P_{T-t}f\log P_{T-t}f\big)(X_t)
-\frac12\int_0^t\frac{|\nabla P_{T-s}f|^2}{P_{T-s}f}(X_s)\,ds
\end{align}
are local martingales (up to their respective lifetimes).
\end{lemma}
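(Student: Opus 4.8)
The plan is to apply the Itô formula to each of the three processes and read off the drift term using the identities of Lemma~\ref{Lemma:1.1}, together with the basic fact that for any smooth function $\varphi$ on $M\times[0,T]$ the process $\varphi(X_t,t)$ has Itô differential $d\varphi(X_t,t)=\langle\nabla\varphi(X_t,t),/\!/_t\,dB_t\rangle+\bigl(\tfrac12\Delta+\partial_t\bigr)\varphi(X_t,t)\,dt$, where here the relevant function is built from $u(x,t)=P_{T-t}f(x)$ so that $\partial_t$ acts as $-\tfrac12\Delta$ along $u$ in the combinations $\bigl(\tfrac12\Delta-\partial_s\bigr)$ appearing in the lemma (with $s$ the time variable of $u$).

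\smallskip

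First I would treat $m^2$ and $m^3$, since they are the easier cases. Writing $g(x,s)=\log u(x,s)$ with $u=P_sf$, Itô's formula gives $d\bigl(g(X_t,T-t)\bigr)=dN_t+\bigl(\tfrac12\Delta g-\partial_s g\big|_{s=T-t}\bigr)(X_t)\,dt$ for a local martingale $N$; by \eqref{Eq:1.1a} the bracket equals $-\tfrac12\,|\nabla u|^2/u^2\,(X_t,T-t)=-\tfrac12\,|\nabla P_{T-t}f/P_{T-t}f|^2(X_t)$, which is exactly cancelled by the time-derivative of the added integral term in $m^2$. Hence $m^2$ is a local martingale up to the natural lifetime. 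The argument for $m^3$ is identical, using \eqref{Eq:1.1b} in place of \eqref{Eq:1.1a}: the Itô drift of $(u\log u)(X_t,T-t)$ is $\tfrac12\,|\nabla u|^2/u\,(X_t,T-t)$, cancelled by the derivative of the subtracted integral.

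\smallskip

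For $m^1$ I would set $h(x,s)=|\nabla P_sf|^2/P_sf\,(x)$ and apply the product rule to $h(X_t,T-t)\,e^{-A_t}$, where $A_t=\int_0^t\underline{\Ric}(X_r)\,dr$. This yields $dm^1_t = e^{-A_t}\,dM_t + e^{-A_t}\Bigl[\bigl(\tfrac12\Delta h-\partial_s h\big|_{s=T-t}\bigr)(X_t) - \underline{\Ric}(X_t)\,h(X_t,T-t)\Bigr]dt$ for a local martingale part $M$. By \eqref{Eq:1.1c} the first bracketed term is $\tfrac1u\bigl|\Hess u-\tfrac{\nabla u\otimes\nabla u}{u}\bigr|^2 + \tfrac{\Ric(\nabla u,\nabla u)}{u}$ evaluated at $(X_t,T-t)$; the first summand is manifestly nonnegative, and since $\Ric(\nabla u,\nabla u)\geq \underline{\Ric}\cdot|\nabla u|^2$ pointwise, we get $\tfrac{\Ric(\nabla u,\nabla u)}{u}-\underline{\Ric}\cdot\tfrac{|\nabla u|^2}{u}\geq 0$. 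Thus the total drift of $m^1$ is nonnegative, so $m^1$ is a local submartingale up to the natural lifetime. The one point requiring a word of care is that $\underline{\Ric}$ need not be smooth (it is only locally Lipschitz, as a minimum of eigenvalues), so $A_t$ should be understood as a continuous additive functional and the product rule applied accordingly; since $A$ has finite variation this causes no difficulty, and the pointwise inequality $\Ric\geq\underline{\Ric}\cdot\mathrm{id}$ is all that is used. I expect this eigenvalue-regularity bookkeeping, rather than any analytic subtlety, to be the only mild obstacle; everything else is a direct transcription of Lemma~\ref{Lemma:1.1} through Itô's formula.
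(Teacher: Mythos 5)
Your proof is correct and follows essentially the same route as the paper: the paper cites Corollary~\ref{cor:1.1} (which is derived from \eqref{Eq:1.1c}) for $m^1$ and Eqs.~\eqref{Eq:1.1a}, \eqref{Eq:1.1b} for $m^2$, $m^3$, and your transcription via It\^o's formula with the pointwise bound $\Ric(\nabla u,\nabla u)\ge\underline{\Ric}\,|\nabla u|^2$ is exactly what is intended. Your remark about the mere local Lipschitz regularity of $\underline{\Ric}$ is a sound observation but not a deviation in method.
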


\begin{proof}
The first claim is a consequence of \eqref{Eq:1.4a}; 
the second part comes from Eqs.~\eqref{Eq:1.1a} and \eqref{Eq:1.1b}.
\end{proof}

\begin{lemma}\label{Lemma:2.4}
Let $M$ be a Riemannian manifold and $u(x,t)=P_tf(x)$ be a positive 
solution of the heat equation \eqref{Eq:heat_equation_again} on $M\times{[0,T]}$.
If $\Ric\geq-k$ for some $k\geq0$, then for any Brownian motion $X$ on~$M$, the process
\begin{equation}\label{Eq:2.12}
N_t:=\frac{T-t}{2(1+k(T-t))}\,\frac{|\nabla P_{T-t}f|^2}{P_{T-t}f}(X_t)
+\big(P_{T-t}f\log P_{T-t}f\big)(X_t)
\end{equation}
is a local submartingale (up to its lifetime).
\end{lemma}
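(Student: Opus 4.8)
The plan is to apply It\^o's formula to $N_t$ and show the drift part is nonnegative. Write $N_t = \phi(T-t)\, G(X_t) + H(X_t)$ where $\phi(s) = \dfrac{s}{2(1+ks)}$, $G = \dfrac{|\nabla P_{T-t}f|^2}{P_{T-t}f}$, and $H = P_{T-t}f\log P_{T-t}f$ (with the understanding that $G$ and $H$ depend on $t$ through the semigroup). By the standard It\^o rule for a space-time function evaluated along Brownian motion, the finite-variation part of $d N_t$ is
\[
\left[-\phi'(T-t)\,G + \phi(T-t)\Bigl(\tfrac12\Delta - \partial_t\Bigr)G + \Bigl(\tfrac12\Delta - \partial_t\Bigr)H\right]dt,
\]
so it suffices to check that the bracket is $\geq 0$. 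Here I would use Lemma~\ref{Lemma:2.1}: the term $\bigl(\tfrac12\Delta - \partial_t\bigr)H$ equals $\tfrac12\,\dfrac{|\nabla P_{T-t}f|^2}{P_{T-t}f}$ by \eqref{Eq:1.1b}, i.e.\ exactly $\tfrac12 G$.

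Next I would invoke Corollary~\ref{cor:1.1}, specifically \eqref{Eq:1.5}, which gives $\bigl(\tfrac12\Delta-\partial_t\bigr)G \geq -k\,G$. Hence the bracket is bounded below by
\[
\bigl(-\phi'(T-t) - k\,\phi(T-t) + \tfrac12\bigr)\,G.
\]
Since $G = |\nabla P_{T-t}f|^2/P_{T-t}f \geq 0$, it remains to verify the pointwise inequality $\phi'(s) + k\,\phi(s) \leq \tfrac12$ for all $s\in[0,T]$. With $\phi(s) = \dfrac{s}{2(1+ks)}$ one computes $\phi'(s) = \dfrac{1}{2(1+ks)^2}$ and
\[
\phi'(s) + k\phi(s) = \frac{1}{2(1+ks)^2} + \frac{ks}{2(1+ks)} = \frac{1 + ks(1+ks)}{2(1+ks)^2} = \frac{1+ks+k^2s^2}{2(1+ks)^2}.
\]
Comparing with $\tfrac12 = \dfrac{(1+ks)^2}{2(1+ks)^2} = \dfrac{1+2ks+k^2s^2}{2(1+ks)^2}$, we see the difference is $\dfrac{ks}{2(1+ks)^2}\geq 0$, so indeed $\phi'(s)+k\phi(s)\leq\tfrac12$ with equality only at $s=0$. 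This proves the drift is nonnegative, so $N_t$ is a local submartingale up to the lifetime of $X$.

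A remark on why the weight $\phi$ is chosen this way: one wants the largest $\phi$ with $\phi(0)=0$ satisfying the differential inequality $\phi' + k\phi \leq \tfrac12$, and $\phi(s) = \dfrac{s}{2(1+ks)}$ is essentially the solution of $\phi' + k\phi = \tfrac12$ normalized suitably — the slack $\dfrac{ks}{2(1+ks)^2}$ is precisely what the curvature lower bound $-k$ costs us. I do not expect any serious obstacle here: the only subtlety is the bookkeeping in the space-time It\^o formula (the sign of $\partial_t$ coming from evaluating $P_{T-t}f$ at the \emph{backward} time $T-t$, and likewise $-\phi'(T-t)$ from differentiating $\phi(T-t)$), and the fact that all identities are local, so one must restrict to the natural lifetime and pass to a localizing sequence of stopping times — but this is exactly the same framework already used in Lemma~\ref{Lemma:2.1}, and $P_{T-t}f>0$ keeps all quantities well-defined. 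The nonnegativity of $G$ is what makes the single scalar inequality on $\phi$ sufficient.
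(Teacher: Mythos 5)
Your proof is correct and follows exactly the route the paper indicates: apply It\^o's formula to $N_t$, use \eqref{Eq:1.1b} to identify the drift coming from the $P_{T-t}f\log P_{T-t}f$ term as $\tfrac12\,|\nabla P_{T-t}f|^2/P_{T-t}f$, use \eqref{Eq:1.5} to bound the drift of the gradient term from below, and check that $\phi(s)=s/(2(1+ks))$ satisfies $\phi'+k\phi\le\tfrac12$. The paper's one-line proof compresses precisely this argument; your version simply makes the ODE verification and the backward-time sign bookkeeping explicit.
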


\begin{proof} The proof follows from It\^o's formula using inequality~\eqref{Eq:1.5}, 
along with Eq.~\eqref{Eq:1.1b}.
\end{proof}

\section{Global gradient estimates}
\label{Section3}
\setcounter{equation}0

In this section we explain how submartingales related to positive solutions of the heat equation 
can be turned into gradient estimates. 
The resulting estimates of this section are classical inequalities; our focus lies on the 
stochastic approach. 

The main problem in the subsequent sections will then be to use methods of Stochastic Analysis 
to derive localized versions of the bounds.
The following gradient estimates follow immediately from Lemma \ref{Lemma:2.4}.

\begin{thm}[Entropy estimate]
\label{thm:entropy}
Let $u(x,t)=P_tf(x)$ be a positive solution of the heat equation on a compact 
manifold $M$. Assume that $\Ric\geq-k$ for some $k\geq0$. Then 
\begin{equation}
\label{Eq:2.7}
\left|\frac{\nabla P_{T}f}{P_{T}f}\right|^2(x)
\leq 2\left(\frac1T+k\right)\,P_{T}\left(\frac f{P_{T}f(x)}\log \frac f{P_{T}f(x)}\right)(x).
\end{equation}
\end{thm}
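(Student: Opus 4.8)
The plan is to feed the submartingale $N_t$ from Lemma~\ref{Lemma:2.4} into the submartingale inequality and to read off \eqref{Eq:2.7} by comparing its value at $t=0$ with its expectation at $t=T$. Because $M$ is compact, the Brownian motion $X_\bull(x)$ has infinite lifetime, so the localization in Lemma~\ref{Lemma:2.4} is innocuous; moreover $P_{T-t}f$ is bounded above and bounded below away from~$0$ on $[0,T]$ (parabolic minimum principle) while $|\nabla P_{T-t}f|$ stays bounded, and the prefactor $\frac{T-t}{2(1+k(T-t))}$ is bounded on $[0,T]$ and vanishes at $t=T$. Hence $N_t$ is a bounded process on $[0,T]$, and a bounded local submartingale is a genuine submartingale; in particular, with $X_0=x$ deterministic,
\begin{equation*}
N_0\le\E[N_T].
\end{equation*}

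Next I evaluate the two sides. At $t=0$,
\begin{equation*}
N_0=\frac{T}{2(1+kT)}\,\frac{|\nabla P_Tf|^2}{P_Tf}(x)+\bigl(P_Tf\log P_Tf\bigr)(x),
\end{equation*}
whereas at $t=T$ the first summand of $N_t$ drops out because of the factor $T-t$, so $N_T=(f\log f)(X_T(x))$ and $\E[N_T]=P_T(f\log f)(x)$. Therefore
\begin{equation*}
\frac{T}{2(1+kT)}\,\frac{|\nabla P_Tf|^2}{P_Tf}(x)\le P_T(f\log f)(x)-\bigl(P_Tf\log P_Tf\bigr)(x).
\end{equation*}

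It remains to rearrange. Set $c:=P_Tf(x)$, a constant. Dividing the last inequality by $c$ turns the left-hand side into $\frac{T}{2(1+kT)}\,\bigl|\frac{\nabla P_Tf}{P_Tf}\bigr|^2(x)$, and, using
\begin{equation*}
P_T\Bigl(\tfrac fc\log\tfrac fc\Bigr)(x)=\tfrac1c\,P_T(f\log f)(x)-\log c,
\end{equation*}
the right-hand side becomes exactly $P_T\bigl(\frac{f}{P_Tf(x)}\log\frac{f}{P_Tf(x)}\bigr)(x)$. Since $\frac{2(1+kT)}{T}=2\bigl(\frac1T+k\bigr)$, multiplying through by this factor gives \eqref{Eq:2.7}.

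The only genuine obstacle is the first paragraph: upgrading the local submartingale of Lemma~\ref{Lemma:2.4} to a true submartingale and, in particular, making sure the term $|\nabla P_{T-t}f|^2/P_{T-t}f$ does not spoil integrability as $t\uparrow T$. On a compact manifold with smooth $f$ the uniform bounds above settle this at once; for merely continuous $f$ one first proves the inequality for smooth $f$ and then passes to the limit. The rest is purely algebraic.
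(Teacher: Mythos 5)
Your proof is correct and follows essentially the same route as the paper: apply the submartingale $N_t$ of Lemma~\ref{Lemma:2.4}, compare $N_0$ with $\E[N_T]$, and rearrange using $P_T\bigl(\tfrac fc\log\tfrac fc\bigr)(x)=\tfrac1c P_T(f\log f)(x)-\log c$ with $c=P_Tf(x)$. The extra detail you give on upgrading the local submartingale to a true submartingale (boundedness on a compact manifold) is exactly the justification the paper invokes in one line.
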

 
\begin{proof}
Indeed if $M$ is compact, then the local submartingale $N_t$ in \eqref{Eq:2.12} is a true 
submartingale. Let
$$h(t)=\frac{T-t}{2(1+k(T-t))}.$$ 
Then, evaluating $\E[N_0]\leq\E[N_T]$, we obtain
\begin{equation*}
h(0)\,\frac{|\nabla P_{T}f|^2}{P_{T}f}(x)
+P_{T}f(x)\log P_{T}f(x)\leq P_{T}(f\log f)(x),
\end{equation*}
or in other words,
\begin{equation*}
\frac{|\nabla P_{T}f|^2}{P_{T}f}(x)
\leq \frac1{h(0)}\,P_{T}\left(f\log \frac f{P_{T}f(x)}\right)(x).
\end{equation*}
Dividing through $P_{T}f(x)$ completes the proof.
\end{proof}

\begin{cor}
Keeping notation and assumptions of Theorem \textup{\ref{thm:entropy}} 
we observe that, for any $\delta>0$,
\begin{align}
|\nabla P_Tf(x)|&\leq\ \frac1{2\delta}\left(\frac1T+k\right)\,P_Tf(x)\notag\\
&\quad+\delta\,\big[P_T\left(f\log f\right)(x)-P_Tf(x)\log P_Tf(x)\big].
\label{Eq:Entropy1}
\end{align}
\end{cor}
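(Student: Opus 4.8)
The plan is to obtain this as an immediate consequence of the entropy estimate \eqref{Eq:2.7}; no further probabilistic input is needed, only one application of Young's inequality. First I would rewrite the right-hand side of \eqref{Eq:2.7} in the form occurring in \eqref{Eq:Entropy1}. Since $P_Tf(x)$ is a constant with respect to the variable over which $P_T$ integrates, $\log\frac f{P_Tf(x)}=\log f-\log P_Tf(x)$ splits off a constant, so that
\[
P_T\!\left(\frac f{P_{T}f(x)}\log\frac f{P_{T}f(x)}\right)\!(x)
=\frac1{P_Tf(x)}\,\Phi(x),\qquad
\Phi(x):=P_T(f\log f)(x)-P_Tf(x)\log P_Tf(x).
\]
By Jensen's inequality for the convex function $t\mapsto t\log t$ one has $\Phi(x)\ge 0$; this is the point that makes the square root in the next step legitimate, and it is also what makes the resulting bound non-vacuous.

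Substituting this into \eqref{Eq:2.7} and multiplying through by $\bigl(P_Tf(x)\bigr)^2$ turns the estimate into
\[
|\nabla P_Tf(x)|^2\le 2\left(\frac1T+k\right)P_Tf(x)\,\Phi(x).
\]
Then I would take square roots and write the right-hand side as the product of the two nonnegative quantities $\frac1\delta\left(\frac1T+k\right)P_Tf(x)$ and $2\delta\,\Phi(x)$; the elementary inequality $\sqrt{ab}\le\frac12(a+b)$ now yields precisely
\[
|\nabla P_Tf(x)|\le\frac1{2\delta}\left(\frac1T+k\right)P_Tf(x)+\delta\,\Phi(x),
\]
which is \eqref{Eq:Entropy1}.

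There is essentially no obstacle here: the whole argument is a rearrangement of \eqref{Eq:2.7} combined with a single use of Young's inequality. The only points deserving a moment of attention are the bookkeeping that makes the constants in the split come out exactly as stated, and the remark that $\Phi(x)\ge 0$. I would also note why keeping $\delta$ free is worthwhile: the two terms on the right scale differently, and optimising over $\delta$ (namely $\delta^{2}=\frac1{2\Phi(x)}\left(\frac1T+k\right)P_Tf(x)$) simply reproduces \eqref{Eq:2.7}, whereas the stated form, being separately linear in $P_Tf(x)$ and in the entropy $\Phi(x)$, is the shape convenient for the subsequent applications.
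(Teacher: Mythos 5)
Your argument is correct and is essentially identical to the paper's proof: both derive \eqref{Eq:Entropy1} from the entropy estimate \eqref{Eq:2.7} by splitting off $\log P_Tf(x)$ and then applying the elementary inequality $\sqrt{ab}\le\tfrac12(a+b)$ with the weights arranged to introduce the free parameter~$\delta$. Your added remark that $\Phi(x)\geq 0$ by Jensen's inequality (so that taking the square root is legitimate and the bound is non-vacuous) is a small but welcome clarification that the paper leaves implicit.
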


\begin{proof}
Indeed with $h(0)=T/{(2+2kT)}$, we conclude from ~\eqref{Eq:2.7} that
\begin{align*}
\frac{|\nabla P_{T}f|}{P_{T}f}(x)
&\leq \sqrt{\frac1{2\delta\,h(0)}}\,\sqrt{2\delta\,P_{T}
  \left(\frac f{P_{T}f(x)}\log\frac f{P_{T}f(x)}\right)(x)}\\
&\leq \frac1{2\delta}\,\left(\frac1T+k\right)
+\delta\,P_{T}\left(\frac f{P_{T}f(x)}\log \frac f{P_{T}f(x)}\right)(x).
\end{align*}
\end{proof}

\begin{cor}[Hamilton \cite{Hamilton:93}] Let $M$ be a compact Riemannian manifold such that 
$\Ric\geq-k$ throughout $M$ for some $k\geq0$.
Suppose that $u(x,t)$ is a positive 
solution of the heat equation \eqref{Eq:heat_equation_again} on $M\times{[0,T]}$, 
and let $A:=\sup_{M\times{[0,T]}}u$.
Then
\begin{equation}\label{Eq:Hamilton}
\frac{\vert\nabla u\vert^2}{u^2}(x,T)\leq2\left(\frac1T+k\right)\log\frac A{u(x,T)}\,.
\end{equation}
In particular, if $\Ric\geq0$ then
\begin{equation}\label{Eq:Hamilton0}
\frac{\vert\nabla u\vert}{u}(x,T)\leq\frac1{T^{1/2}}\sqrt{2\log\frac A{u(x,T)}}\,.
\end{equation}
\end{cor}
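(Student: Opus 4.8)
The plan is to read off this corollary from the entropy estimate of Theorem~\ref{thm:entropy} together with one elementary monotonicity inequality. Write $f=u(\newdot,0)$, so that $P_Tf(x)=u(x,T)$ and, since $A=\sup_{M\times[0,T]}u$, in particular $0<f\le A$. Because $t\mapsto\log t$ is increasing and $f\ge0$, we have the pointwise bound $f\log f\le f\log A$, hence $P_T(f\log f)(x)\le(\log A)\,P_Tf(x)=u(x,T)\log A$. Expanding the relative–entropy term on the right-hand side of~\eqref{Eq:2.7},
\[
P_T\!\left(\frac{f}{P_Tf(x)}\log\frac{f}{P_Tf(x)}\right)(x)
=\frac{P_T(f\log f)(x)}{u(x,T)}-\log u(x,T)
\le\log A-\log u(x,T)=\log\frac{A}{u(x,T)},
\]
and this last quantity is automatically $\ge0$ since $u(x,T)\le A$. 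Substituting back into~\eqref{Eq:2.7} gives exactly~\eqref{Eq:Hamilton}.

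It is worth spelling out the genuinely stochastic route, which bypasses even the statement of Theorem~\ref{thm:entropy} and goes straight to the submartingale $N_t$ of Lemma~\ref{Lemma:2.4}. On a compact manifold $P_{T-t}f$ together with its first two derivatives stay bounded on $[0,T]$, so the local submartingale $N_t$ is a true submartingale and $\E[N_0]\le\E[N_T]$. Since $N_T=(f\log f)(X_T)$ and $\E[(f\log f)(X_T)]=P_T(f\log f)(x)$, and writing $h(0)=T/\big(2(1+kT)\big)$ for the coefficient appearing in~\eqref{Eq:2.12} at $t=0$, this reads
\[
h(0)\,\frac{|\nabla P_Tf|^2}{P_Tf}(x)+P_Tf(x)\log P_Tf(x)\le P_T(f\log f)(x)\le u(x,T)\log A.
\]
Dividing through by $u(x,T)$ and using $1/h(0)=2\big(\tfrac1T+k\big)$ yields~\eqref{Eq:Hamilton} once more. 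For the special case $\Ric\ge0$ one simply sets $k=0$ and takes the square root of~\eqref{Eq:Hamilton}, which is legitimate because its right-hand side is nonnegative, to obtain~\eqref{Eq:Hamilton0}.

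There is essentially no obstacle here: all the analytic content has already been packaged into Lemma~\ref{Lemma:2.4} (equivalently Theorem~\ref{thm:entropy}), and the passage to Hamilton's form is the one-line estimate $f\log f\le f\log A$. The only points that deserve a word of care are that $A$ must be the supremum of $u$ over the whole cylinder $M\times[0,T]$, so that $f=u(\newdot,0)\le A$ is genuinely available, and that $\log\big(A/u(x,T)\big)\ge0$, so that extracting the square root in the $k=0$ case is harmless.
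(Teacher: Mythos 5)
Your proof is correct and follows the same route as the paper's: apply the entropy estimate from Theorem~\ref{thm:entropy} and bound $P_T\!\left(\frac f{P_Tf(x)}\log\frac f{P_Tf(x)}\right)(x)$ above by $\log\frac{A}{u(x,T)}$ using $f\le A$. The paper packages this last step as the observation that the normalized initial datum $f^*=f/P_Tf(x)$ satisfies $P_Tf^*(x)=1$ and $\sup f^*\le A/u(x,T)$, but that is exactly the inequality $f\log f\le f\log A$ you use, just phrased before rather than after dividing by $P_Tf(x)$; your second, ``genuinely stochastic'' derivation is simply an unwinding of the proof of Theorem~\ref{thm:entropy} via Lemma~\ref{Lemma:2.4}, not a different argument.
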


\begin{proof} 
The proof of \eqref{Eq:Hamilton} is an application of Theorem \ref{thm:entropy}, along 
with the observation that ${P_Tf^*}(x)=1$ when $f$ is normalized as $f^*:=f/{P_Tf(x)}$ for fixed $x$.
\end{proof}

\begin{remark}
In the proofs above compactness of the manifold has only been used 
to assure that the local submartingale \eqref{Eq:2.12} is a true submartingale.
The results extend to bounded positive solution of the heat equation 
on complete manifolds with lower Ricci bounds. 
Indeed, $u(x,t)=P_tf(x)$ may be assumed to be bounded away from $0$ (otherwise 
one may first pass to $u^*:=u+\varepsilon$ and let $\varepsilon>0$ tend to~$0$ 
in the obtained estimate). The term $(T-t)\,\vert\nabla P_{T-t}f\vert^2$ may be bounded 
by Bismut's formula, see \cite{Thalmaier-Wang:98}. 
\end{remark}

We now turn to the classical Li-Yau estimate \eqref{Eq:Li-Yau_Class}.
Let $M$ be a given complete Riemannian manifold of dimension $n$ 
such that $\Ric\geq-k$ for some $k\geq0$.
Suppose that $u$ is a positive solution to the heat equation \eqref{Eq:heat_equation_again} 
on $M\times{[0,T]}$.
Starting from \eqref{Eq:1.4b} in Corollary \ref{cor:1.1} we have 
\begin{equation}\label{Eq:1.4q}
\left(\frac12\Delta-{\partial_t}\right)\frac{\vert\nabla u\vert^2}{u}
\geq\frac1{nu}\left(\Delta u-\frac{\vert\nabla u\vert^2}{u}\right)^2-k\frac{\vert\nabla u\vert^2}{u},
\end{equation}
 and thus
\begin{equation}\label{Eq:1.4q_again}
\left(\frac12\Delta-{\partial_t}\right)\frac{\vert\nabla u\vert^2}{u}
\geq\frac1{nu}\,q^2-k\,q-k\,\Delta u,
\end{equation}
where 
\begin{equation*}
q:=\frac{\vert\nabla u\vert^2}{u}-\Delta u=\frac{\vert\nabla u\vert^2}{u}-2\,{\partial_t}u.
\end{equation*}
Fixing $x\in M$, let
$X=X(x)$ be a Brownian motion on $M$ starting at~$x$. 
Our first goal is to investigate the process
$$\left(h_t\,\frac{\vert\nabla u\vert^2}{u}(X_t,T-t)\right)_{t\geq0}$$
where $h_t=\ell_t^2$ for some adapted continuous real-valued process $\ell_t$ 
with absolutely continuous paths such that $\ell_0=1$ and $\ell_T=0$.

Let $q_t=q(X_t,T-t)$, $u_t=u(X_t,T-t)$ and $(\Delta u)_t=\Delta u(X_t,T-t)$.
Using \eqref{Eq:1.4q_again} we find (modulo differentials of local martingales) 
\begin{align}\label{Eq:ht_qt}
d&\left(h_t\,\frac{\vert\nabla u\vert^2}{u}(X_t,T-t)\right)\\
&\qquad\geq \left[\dot h_tq_t+\frac{h_t}{nu_t}\,q_t^2-k\,q_th_t+\left(\dot h_t-k\,h_t\right)(\Delta u)_t\right]dt.
\end{align}
Minimizing the term 
$$\frac{h_t}{nu_t}\,q_t^2+(\dot h_t-k\,h_t)q_t$$ 
as a quadratic function of $q_t$, we find 
$$\frac{h_t}{nu_t}\,q_t^2+(\dot h_t-k\,h_t)q_t\geq -\frac{(\dot h_t-k\,h_t)^2}{4h_t}\,nu_t.$$
Thus, integrating \eqref{Eq:ht_qt} from $0$ to $T$ and taking expectations, we obtain 
\begin{equation}\label{Eq:ht_qt1}
\frac{\vert\nabla u\vert^2}{u}(x,T)
\leq\E\left[\int_0^T \left(n\,\frac{(\dot h_t-k\,h_t)^2}{4h_t}\,u_t+(kh_t-\dot h_t)(\Delta u)_t\right) dt
\right]
\end{equation}

\begin{thm}[Li-Yau inequality for $\Ric\geq0$]\label{thm:Li_Yau_Ric0} 
Let $M$ be a complete Riemannian manifold of dimension $n$ 
such that $\Ric\geq0$. 
Let $u=u(x,t)$ be a positive bounded solution of the heat equation \eqref{Eq:heat_equation_again} 
on $M\times{[0,T]}$.
Then, for each $x\in D$, 
\begin{equation}\label{Eq:Li_Yau_Ric0}
\frac{\vert\nabla u\vert^2}{u^2}(x,T)-\frac{\Delta u}{u}(x,T)
\leq n\, \E\left[\int_0^T |\dot\ell_t|^2 \,\frac{u(X_t(x),T-t)}{u(x,T)}\,dt
\right]
\end{equation}
where $(\ell_t)$ is an adapted continuous real-valued process $\ell_t$ 
with absolutely continuous paths such that $\ell_0=1$ and $\ell_T=0$.

In particular, with the choice $\ell_t:=(T-t)/T$, we obtain 
\begin{equation*}
\frac{\vert\nabla u\vert^2}{u^2}(x,T)-\frac{\Delta u}{u}(x,T)
\leq \frac nT\,.
\end{equation*}
which is the classical estimate of Li-Yau. 
\end{thm}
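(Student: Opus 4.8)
The plan is to specialize the general computation leading to \eqref{Eq:ht_qt1} to the case $k=0$, and then to choose the weight $h_t=\ell_t^2$ in an optimal way. When $\Ric\geq0$ we may take $k=0$ in \eqref{Eq:1.4q_again}, so the drift inequality \eqref{Eq:ht_qt} simplifies to
\begin{equation*}
d\left(h_t\,\frac{|\nabla u|^2}{u}(X_t,T-t)\right)\geq\left[\dot h_t q_t+\frac{h_t}{nu_t}\,q_t^2+\dot h_t\,(\Delta u)_t\right]dt
\end{equation*}
modulo a local martingale. Completing the square in $q_t$ with $k=0$ gives
\begin{equation*}
\frac{h_t}{nu_t}\,q_t^2+\dot h_t q_t\geq-\frac{\dot h_t^{\,2}}{4h_t}\,nu_t=-n\,\dot\ell_t^{\,2}\,u_t,
\end{equation*}
since $\dot h_t^{\,2}/(4h_t)=\dot\ell_t^{\,2}$ when $h_t=\ell_t^2$. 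Substituting this back leaves the drift bounded below by $\bigl(-n\,\dot\ell_t^{\,2}u_t-\dot h_t(\Delta u)_t\bigr)dt$, which is exactly the $k=0$ specialization of \eqref{Eq:ht_qt1}.

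Next I would integrate from $0$ to $T$ and take expectations. On the left side, $h_0=\ell_0^2=1$ gives the value $\frac{|\nabla u|^2}{u}(x,T)$ at $t=0$, while $h_T=\ell_T^2=0$ kills the boundary term at $t=T$; hence, after rearranging,
\begin{equation*}
\frac{|\nabla u|^2}{u}(x,T)\leq\E\left[\int_0^T\Bigl(n\,\dot\ell_t^{\,2}\,u_t-\dot h_t\,(\Delta u)_t\Bigr)dt\right].
\end{equation*}
The term involving $\Delta u$ needs to be rewritten. Here I would use that $(\Delta u)_t=\Delta u(X_t,T-t)$ and that, by the heat equation, $\Delta u=2\,\partial_t u$, so that along the space-time Brownian motion $s\mapsto u(X_s,T-s)$ one has $d\bigl(u(X_s,T-s)\bigr)=\bigl(\tfrac12\Delta u-\partial_t u\bigr)(X_s,T-s)\,ds+\text{(mart.)}=\text{(local mart.)}$; thus $u(X_s,T-s)$ is itself a local martingale. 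Integrating by parts, $\int_0^T(-\dot h_t)\,(\Delta u)_t\,dt=\int_0^T(-\dot h_t)\cdot 2\,\partial_t u(X_t,T-t)\,dt$; but in fact the cleanest route is to note that $\int_0^T h_t\,d\bigl(u(X_t,T-t)\bigr)$ is a local martingale (as $u(X_\cdot,T-\cdot)$ is), and to integrate $d\bigl(h_t\,u(X_t,T-t)\bigr)=\dot h_t\,u_t\,dt+h_t\,d(u(X_t,T-t))$ from $0$ to $T$, yielding $\E\bigl[\int_0^T\dot h_t\,u_t\,dt\bigr]=-u(x,T)$ (using $h_0=1$, $h_T=0$). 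This is not literally the term we have, but a parallel short computation handles $\int_0^T(-\dot h_t)(\Delta u)_t\,dt$: writing $\Delta u=2\,\partial_t u$ and noting $\partial_t u(X_t,T-t)=-\frac{d}{dt}u(X_t,T-t)+\text{(drift from }X)$, one extracts $\E\bigl[\int_0^T(-\dot h_t)(\Delta u)_t\,dt\bigr]=\E\bigl[h_t\,(\text{something})\bigr]+\cdots$; more efficiently, one absorbs $\dot h_t(\Delta u)_t$ into the local-martingale part from the start, because the process $t\mapsto h_t\,u(X_t,T-t)$ has drift $\dot h_t u_t\,dt$ only, and separately $t\mapsto\int_0^t(-\dot h_s)(\Delta u)_s\,ds-h_t\,(\cdots)$ can be recognized. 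The upshot of this bookkeeping is the identity $\E\bigl[\int_0^T(-\dot h_t)(\Delta u)_t\,dt\bigr]=-\Delta$-contribution that converts into $-u(x,T)\cdot(\text{boundary})+u$-terms; carried out carefully it produces precisely $\E\bigl[\int_0^T(-\dot h_t)(\Delta u)_t\,dt\bigr]=$ (a martingale expectation) so that this term contributes $-\,\E[\,h_t(\Delta u)_t\,]$ boundary pieces which vanish. In short, the $\Delta u$ term on the right of the displayed inequality can be moved to the left and reassembled as $-\,u(x,T)\cdot\frac{\Delta u}{u}(x,T)$-type expression; dividing the resulting inequality through by $u(x,T)$ gives \eqref{Eq:Li_Yau_Ric0}.

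Finally, for the ``in particular'' clause I would take $\ell_t=(T-t)/T$, so $\dot\ell_t=-1/T$ and $|\dot\ell_t|^2=1/T^2$, giving
\begin{equation*}
\frac{|\nabla u|^2}{u^2}(x,T)-\frac{\Delta u}{u}(x,T)\leq\frac{n}{T^2}\,\E\left[\int_0^T\frac{u(X_t(x),T-t)}{u(x,T)}\,dt\right].
\end{equation*}
Since $u(X_\cdot,T-\cdot)$ is a (bounded, hence true) martingale, $\E[u(X_t(x),T-t)]=u(x,T)$ for every $t$, so the integral equals $\int_0^T dt=T$, and the right side collapses to $n/T$. I expect the main obstacle to be the careful justification of the martingale/integrability bookkeeping: the processes in \eqref{Eq:ht_qt} are only \emph{local} submartingales a priori, so passing to genuine expectations requires the boundedness of $u$ (to control $u(X_t,T-t)$), a Fatou/localization argument for the nonnegative drift term $\frac{h_t}{nu_t}q_t^2$, and a clean treatment of the $\Delta u$ term — which is not sign-definite — via the identity $\Delta u=2\partial_t u$ together with the fact that $h_t u(X_t,T-t)$ has no martingale-breaking drift. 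Everything else is the algebra of completing the square and the two boundary evaluations $h_0=1$, $h_T=0$.
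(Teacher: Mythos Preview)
Your setup and the completion-of-the-square step are correct and match the paper exactly, and your last paragraph (the deterministic choice $\ell_t=(T-t)/T$ together with $\E[u(X_t,T-t)]=u(x,T)$) is fine. The gap is in the middle, where you must evaluate $\E\bigl[\int_0^T(-\dot h_t)(\Delta u)_t\,dt\bigr]$.

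There your argument dissolves: you try $\Delta u = 2\partial_t u$, then switch to looking at $h_t u_t$, then speak of ``boundary pieces which vanish'', without ever producing an identity. In fact the relevant boundary term does \emph{not} vanish --- it equals $-\Delta u(x,T)$, and that is exactly the contribution you need on the left-hand side of \eqref{Eq:Li_Yau_Ric0}. You even do the correct parallel computation for $u_t$ (obtaining $\E[\int_0^T\dot h_t u_t\,dt]=-u(x,T)$) and announce that ``a parallel short computation handles'' the $\Delta u$ term, but then you never run that parallel computation; instead you veer off into an attempt via $\partial_t u(X_t,T-t)$ that you cannot close.

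The one-line observation you are missing, and which the paper uses, is that $\Delta u$ is \emph{itself} a solution of the heat equation (the operator $\tfrac12\Delta-\partial_t$ commutes with $\Delta$), so $(\Delta u)_t := \Delta u(X_t,T-t)$ is a local martingale. Then $d\bigl(h_t(\Delta u)_t\bigr) \mequal \dot h_t(\Delta u)_t\,dt$ modulo local martingales, and integrating from $0$ to $T$ with $h_T=0$, $h_0=1$ gives
\[
\E\left[\int_0^T \dot h_t\,(\Delta u)_t\,dt\right] = -h_0\,(\Delta u)_0 = -\Delta u(x,T).
\]
Substituting this into your displayed inequality and dividing by $u(x,T)$ yields \eqref{Eq:Li_Yau_Ric0} immediately. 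Your route through $\Delta u=2\partial_t u$ could also be made to work --- $\partial_t u$ solves the heat equation for the same reason --- but you would still need the statement that $(\partial_t u)(X_t,T-t)$ is a local martingale, which is precisely the point you never articulate.
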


\begin{proof}
By \eqref{Eq:ht_qt} we have 
\begin{equation}\label{Eq:ht_qt1a}
\frac{\vert\nabla u\vert^2}{u}(x,T)
\leq\E\left[\int_0^T\left(n\,\frac{|\dot h_t|^2}{4h_t}\,u_t-\dot h_t\,(\Delta u)_t\right) dt.
\right]
\end{equation}
First note that $\Delta u$ is a solution of the heat equation as well, hence $(\Delta u)_t$ a 
local martingale. In particular,
$$
d\big(h_t\,(\Delta u)_t\big)\mequal\dot h_t\,(\Delta u)_t\,dt,
$$
where $\mequal$ stands for equality modulo differentials of local martingales, and hence
$$\E\left[\int_0^T \dot h_t\,(\Delta u)_t\, dt\right]=-h_0\,(\Delta u)_0=-\Delta u(x,T).
$$
Thus \eqref{Eq:ht_qt1a} shows that
\begin{equation*}%\label{Eq:ht_qt_Ric0_new}
\frac{\vert\nabla u\vert^2}{u^2}(x,T)-\frac{\Delta u}{u}(x,T)
\leq n\,\E\left[\int_0^T \dot\ell_t^2\,\frac{u_t}{u_0}\,dt
\right]=n\,\E\left[\int_0^T\dot\ell_t^2\,dt\cdot \frac{u_T}{u_0}\right].
\end{equation*}
\end{proof}

\begin{remarks}
(i) \  Using \eqref{Eq:ht_qt1}, Theorem \ref{thm:Li_Yau_Ric0} 
is easily extended to the case of a lower Ricci bound. 
For local versions of the Li-Yau's estimate one could try to modify the process $\ell_t$
in such a way that $\ell_t$ already vanishes as soon as the Brownian motion $X_t$ 
reaches the boundary of~$D$. We shall not pursue this approach here, but rather adopt 
an even simpler argument in the next section which leads to the local estimate.

(ii) \ Of particular interest are localized versions of the entropy estimates \eqref{Eq:2.7} 
and \eqref{Eq:Entropy1}. Such estimates lead to Harnack inequalities and 
heat kernel bounds, valid on arbitrary manifolds without bounded geometry, see
\cite{Arnaudon-Thalmaier-Wang:06, Arnaudon-Thalmaier-Wang:08}.
Results in this direction will be worked out elsewhere~\cite{Arnaudon-Thalmaier-Wang:09}. 

\end{remarks}

\section{Local Li-Yau type inequalities}
\label{Section4}
\setcounter{equation}0

Our main task of the remaining sections will be to localize the arguments of 
Section \ref{Section3} to cover local solutions of the heat equation on bounded 
domains. We start with the Li-Yau estimate.

\begin{assumption}
\label{Ass1}
Let $M$ be a complete Riemannian manifold of dimension $n$, and  
let $u=u(x,t)$ be a solution of the heat equation \eqref{Eq:heat_equation_again} 
on $D\times{[0,T]}$ where $D\subset M$ is a relatively compact open subset of $M$ 
with nonempty smooth boundary. 
Assume that $u$ is positive and continuous on $\bar D\times{[0,T]}$. 
Furthermore let
\begin{equation}
\label{Eq:RicciBound}
k:=\inf\bigl\{\Ric_x(v,v): v\in T_xM,\ |v|=1,\ x\in D\bigr\}
\end{equation}
be a lower bound for the Ricci curvature on the domain $D$. 
Finally let $\varphi\in C^2(\bar D)$ with $\varphi>0$ in $D$ and $\varphi|\partial D=0$.
\end{assumption}

\begin{assumption}
\label{Ass2}
Let $x\in D$ and $X(x)$ be a Brownian motion on $M$ starting at~$x$ at time~$0$. 
Denote by $\tau(x)$ its first exit time of~$D$. 
\end{assumption}

Now, for $t\in [0,T\wedge\tau(x)[$, consider the process 
\begin{equation}
 \label{E1}
Y_t=C_1\,(T-t)^{-1}+C_2\,\varphi^{-2}(X_t(x))+\alpha k,
\end{equation}
where $C_1$, $C_2>0$ are constants which will be specified later.\goodbreak 

Let $h_t$ be the solution of 
\begin{equation}
 \label{E2}
\dot h_t=-h_tY_t,\qquad h_0=1;
\end{equation}
in other words,
\begin{equation*}
h_t=\exp\left\{-\int_0^t\left(\frac{C_1}{T-r}+\frac{C_2}{\varphi^{2}(X_r(x))}+\alpha k\right)dt\right\}.
\end{equation*}
Then, letting
\begin{equation*}
q_t=\left(\frac{|\nabla u|^2}{u}-\Delta u\right)(X_t,T-t),
\end{equation*}
we find (modulo differentials of local martingales) 
\begin{equation}
 \label{E3}
d(h_tq_t)\ge \left[h_t\frac{q_t^2}{nu_t}+\left(\dot h_t-kh_t\right)q_t-kh_t(\Delta u)_t\right] dt.
\end{equation}
Thus letting 
\begin{equation}
 \label{E4}
S_t:=h_tq_t+nu_t\dot h_t\equiv h_t\,(q_t-nu_tY_t),\quad t\in [0,T\wedge\tau(x)[,
\end{equation}
we get (modulo differentials of local martingales)
\begin{align*}
 dS_t&=d(h_tq_t)-nu_t\,d(h_tY_t)+n\,d[u,\dot h]_t\\
&\ge \left[h_t\frac{q_t^2}{nu_t}+\left(\dot h_t-kh_t\right)q_t-kh_t(\Delta u)_t\right]dt\\
&\quad+nu_t\,h_tY_t^2\,dt\\
&\quad-nu_th_t\left[C_1\,(T-t)^{-2}+C_2\,c_\varphi(X_t)\,\varphi^{-4}(X_t)\right]dt\\
&\quad-nC_2\,h_t\,d[ u,\varphi^{-2}(X(x))]_t
\end{align*}
where the bracket $[\newdot,\newdot]$ stands for quadratic covariation on the space of 
continuous semimartingales and where
\begin{equation}
 \label{E5}
c_\varphi(x)=\left(3|\nabla \varphi|^2-\varphi\Delta \varphi\right)(x).
\end{equation}
On the other hand, denoting again 
\begin{align}
\begin{split}
\label{Eq:Defu_t}
u_t&=u(T-t,\cdot)(X_t),\\ \nabla u_t&=\nabla u(T-t,\cdot)(X_t) 
\end{split}
\intertext{and}
\Delta u_t&=\Delta u(T-t,\cdot)(X_t),
\label{Eq:DefDeltau_t}
\end{align}
we have for any $\alpha>0$,
\begin{align*}
-n&C_2\,h_t\,d[ u,\varphi^{-2}(X_t)]_t\\
&=nC_2\,h_t2\varphi^{-3}(X_t)\langle \nabla u_t,\nabla \varphi(X_t)\rangle\,dt\\
&\ge -2nC_2\,h_t\varphi^{-3}(X_t)|\nabla u_t|\,|\nabla \varphi(X_t)|\,dt\\
&=-2nC_2\,h_t\left(\varphi^{-1}(X_t)\,(\alpha nu_t)^{-1/2}\,|\nabla u_t|\right)\\
&\qquad\times\left((\alpha nu_t)^{1/2}(\varphi^{-2}|\nabla \varphi|)(X_t)\right)\,dt\\
&\ge \left(-\alpha^{-1}C_2\,h_t\varphi^{-2}(X_t)(q_t+\Delta u_t)
-C_2\,\alpha n^2u_th_t\varphi^{-4}|\nabla \varphi|^2(X_t)\right)\,dt\\
&\ge \left(-\alpha^{-1}h_t(Y_t-\alpha k)(q_t+\Delta u_t)
-C_2\,\alpha n^2u_th_t\varphi^{-4}|\nabla \varphi|^2(X_t)\right)\,dt\\
&= \left(\left(\alpha^{-1}\dot h_t+kh_t\right)(q_t+\Delta u_t)
-C_2\,\alpha n^2u_th_t\varphi^{-4}|\nabla \varphi|^2(X_t)\right)\,dt.
\end{align*}
Hence letting, for $\a>0$,
\begin{equation}
 \label{E6}
S_{\a,t}=h_t\left(q_t-\a^{-1}\Delta u_t\right)+nu_t\dot h_t
\end{equation}
we get 
\begin{align*}
 dS_{\a,t}&\ge \left[h_t\,\frac{q_t^2}{nu_t}+\left(1+\a^{-1}\right)\dot h_t\,q_t\right]dt\\
&\quad+h_tY_t^2\,nu_t\,dt\\
&\quad-nu_th_t\Big[C_1(T-t)^{-2}+C_2\,C_{\varphi,\a,n}\,\varphi^{-4}(X_t)\Big]\,dt
\end{align*}
where 
\begin{equation}
 \label{E7}
C_{\varphi,\a,n}=\sup_D\left\{c_\varphi+\a n|\nabla \varphi|^2\right\}
=\sup_D\left\{(3+\a n)|\nabla \varphi|^2-\varphi\Delta \varphi\right\}.
\end{equation}
Minimizing the first line on the right hand side, we find
\begin{align*}
 dS_{\alpha,t}&\ge h_t\,Y_t^2\,nu_t\left(\frac{4-(1+\alpha^{-1})^2}{4}\right)\,dt\\
&\quad-nu_t\,h_t\Big[C_1\,(T-t)^{-2}+C_2\,C_{\varphi,\alpha,n}\,\varphi^{-4}(X_t)\Big]\,dt.
\end{align*}
Putting things together, we arrive at the following result.

\begin{lemma}
We keep notation and assumptions from above. Assume that $\alpha>1$ 
and let
\begin{equation}
 \label{E9}
C_1\ge C_1(\a):=\frac4{4-(1+\a^{-1})^2} 
\end{equation}
and
\begin{align}
 \label{E10}
\begin{split}
C_2&\ge C_2(\varphi,\a,n)\\
&:= \frac4{4-(1+\a^{-1})^2}\,\sup_D\Big\{(3+\a n)|\nabla \varphi|^2-\varphi\Delta \varphi\Big\}.
\end{split}
\end{align}
Then $S_{\a,t}$ is a local submartingale on $[0,T\wedge\tau(x)[$.
\end{lemma}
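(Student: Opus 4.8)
The plan is to read off the conclusion from the computation carried out just before the statement, which already shows that, modulo differentials of local martingales and on $[0,T\wedge\tau(x)[$,
\begin{equation*}
dS_{\a,t}\ \ge\ nu_th_t\Bigl[\beta\,Y_t^2-C_1(T-t)^{-2}-C_2\,C_{\varphi,\a,n}\,\varphi^{-4}(X_t)\Bigr]\,dt,\qquad \beta:=\frac{4-(1+\a^{-1})^2}{4}.
\end{equation*}
Since $\a>1$ forces $1+\a^{-1}<2$ and hence $\beta>0$, what remains is to check that the bracket is $\ge0$ and then to invoke the standard fact that a continuous semimartingale whose finite-variation part is nondecreasing is a local submartingale. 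For the latter I would write $S_{\a,t}=S_{\a,0}+M_t+A_t$ on $[0,T\wedge\tau(x)[$, with $M$ the local-martingale part (legitimate up to the exit time because $u$ is continuous on $\bar D\times[0,T]$ and $\varphi\in C^2(\bar D)$ with $\varphi>0$ in $D$, so all integrand processes are locally bounded on the open interval) and $A_t$ the drift; once $A$ is shown nondecreasing, the claim follows along the usual localizing sequence exhausting $[0,T\wedge\tau(x)[$, e.g.\ the first times at which $\varphi(X)$ drops to $1/m$.

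So the real content is the pointwise inequality $\beta Y_t^2\ge C_1(T-t)^{-2}+C_2C_{\varphi,\a,n}\varphi^{-4}(X_t)$. First I would note that for $t\in[0,T\wedge\tau(x)[$ one has $X_t\in D$, so $(T-t)^{-1}>0$ and $\varphi^{-2}(X_t)>0$; since $Y_t=C_1(T-t)^{-1}+C_2\varphi^{-2}(X_t)+\a k$ and we may assume $k\ge0$, all three summands are nonnegative, whence
\begin{equation*}
Y_t^2\ \ge\ C_1^2(T-t)^{-2}+C_2^2\,\varphi^{-4}(X_t).
\end{equation*}
Multiplying by $\beta$ and using $C_1\ge C_1(\a)=1/\beta$ and $C_2\ge C_2(\varphi,\a,n)=C_{\varphi,\a,n}/\beta$ (the latter already gives $\beta C_2\ge C_{\varphi,\a,n}$; if $C_{\varphi,\a,n}\le0$ the corresponding term is negative and the inequality is only easier), one gets $\beta C_1^2=C_1(\beta C_1)\ge C_1$ and $\beta C_2^2=C_2(\beta C_2)\ge C_2C_{\varphi,\a,n}$, which is exactly the desired pointwise bound. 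Together with $nu_th_t>0$ — positivity of $u$ on $\bar D\times[0,T]$ and $h_t=\exp\{-\int_0^tY_r\,dr\}>0$ — this makes the finite-variation part of $S_{\a,t}$ nondecreasing, finishing the proof.

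I expect no conceptual obstacle here: the quadratic-minimization identity $aq^2+bq\ge -b^2/(4a)$ (applied to the two quadratics in $q_t$) and the Young/Cauchy--Schwarz splitting of the covariation $d[u,\varphi^{-2}(X)]_t$ — which together pin down the precise shapes of $C_1(\a)$ and $C_2(\varphi,\a,n)$ — have already been executed in the passage preceding the statement, so the lemma is only the bookkeeping step of taking $C_1,C_2$ large enough for the drift to have the right sign. The points requiring a little care are exactly of that bookkeeping kind: that $S_{\a,t}$ and its semimartingale decomposition are meaningful only up to $T\wedge\tau(x)$ (since $\varphi^{-2}(X_t)$ may blow up as $t\uparrow\tau(x)$, which is why the assertion is about a \emph{local} submartingale on a half-open interval), the normalization $k\ge0$, and the sign of the constant $C_{\varphi,\a,n}$ (harmless in either case).
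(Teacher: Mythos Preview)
Your proposal is correct and follows essentially the same approach as the paper: both use $Y_t^2\ge C_1^2(T-t)^{-2}+C_2^2\varphi^{-4}(X_t)$ and then check that the choices $C_1\ge 1/\beta$ and $C_2\ge C_{\varphi,\a,n}/\beta$ make each piece of the drift nonnegative. Your additional remarks on localization, the sign of $C_{\varphi,\a,n}$, and the normalization $k\ge0$ are welcome clarifications that the paper leaves implicit.
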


\begin{proof}
Using 
\begin{equation}
 \label{E8}
Y_t^2\ge C_1^2\,(T-t)^{-2}+C_2^2\,\varphi^{-4}(X_t),
\end{equation}
we get 
\begin{align*}
 dS_{\a,t}&\ge\ h_tnu_t\,C_1\left(\frac{4-(1+\a^{-1})^2}{4}\,C_1-1\right)(T-t)^{-2}\,dt\\
&\ +h_tnu_t\,C_2\left(\frac{4-(1+\a^{-1})^2}{4}\,C_2-C_{\varphi,\a,n}\right)\varphi^{-4}(X_t)\,dt.
\end{align*}
Thus, under condition \eqref{E9} and \eqref{E10}, the right-hand side is nonnegative.
\end{proof}

\begin{thm}[Li-Yau inequality; local version]
\label{T1}
We keep the assumptions from above. Let $u=u(x,t)$ be a solution of the heat equation 
on $D\times{[0,T]}$ which is positive and continuous on $\bar D\times{[0,T]}$. 

For any $\a\in{]1,\infty[}$,
we have 
\begin{equation}
\label{E11}
\frac{|\nabla u_0|^2}{u_0^2}-(1+\a^{-1})\,\frac{\Delta u_0}{u_0}
\le \frac{n\,C_1(\a)}{T}+\frac{n\,C_2(\varphi,\a,n)}{\varphi^2(x)}+n\a k,
\end{equation}
where $k$ is a lower Ricci bound on the domain $D$, and where $C_1(\a)$ and $C_2(\varphi,\a,n)$ 
are specified in \eqref{E9}, resp.~\eqref{E10}. 
The function $\varphi$ is 
as in Assumption \textup{\ref{Ass1}}.

Recall that $u_0=u(\newdot,T)$ and $\Delta u_0=\Delta u(\newdot,T)$ according to 
\eqref{Eq:Defu_t}, resp. \eqref{Eq:DefDeltau_t}
\end{thm}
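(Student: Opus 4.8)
The plan is to apply the lemma just proved together with optional stopping for local submartingales, exactly in the spirit of the proof of Theorem~\ref{thm:entropy}, but now localizing in both time and space. First I would fix $x\in D$ and the Brownian motion $X=X(x)$ with exit time $\tau(x)$, and work with the process $S_{\a,t}$ from \eqref{E6} on the random interval $[0,T\wedge\tau(x)[$, where the constants $C_1$, $C_2$ are chosen to be exactly $C_1(\a)$ and $C_2(\varphi,\a,n)$ so that the lemma guarantees $S_{\a,t}$ is a local submartingale there. Recalling that $h_0=1$ and $\dot h_0=-h_0Y_0=-Y_0$, the initial value is
\[
S_{\a,0}=\frac{|\nabla u_0|^2}{u_0}-\a^{-1}\Delta u_0-nu_0\,Y_0
=\frac{|\nabla u_0|^2}{u_0}-\a^{-1}\Delta u_0-nu_0\Big(\tfrac{C_1(\a)}{T}+\tfrac{C_2(\varphi,\a,n)}{\varphi^2(x)}+\a k\Big),
\]
so the claimed inequality \eqref{E11} is precisely the statement that $\E[S_{\a,0}]\le 0$, which will follow once we know $\E[S_{\a,0}]\le\liminf \E[S_{\a,\sigma}]$ along a suitable sequence of stopping times $\sigma\uparrow T\wedge\tau(x)$ together with a sign/vanishing control of the limiting term.

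The key point is the behaviour of $S_{\a,t}$ as $t\uparrow T\wedge\tau(x)$. Since $u$ and $\Delta u$ are continuous on $\bar D\times[0,T]$ and hence bounded there, the factor $h_t(q_t-\a^{-1}\Delta u_t)$ stays bounded by a constant times $h_t$; and the term $nu_t\dot h_t=-nu_t h_t Y_t$ likewise. Now $h_t=\exp\{-\int_0^t Y_r\,dr\}$ with $Y_r\ge 0$, and as $t$ approaches $T\wedge\tau(x)$ either the factor $(T-r)^{-1}$ or the factor $\varphi^{-2}(X_r)$ blows up and is non-integrable near the endpoint (because $\varphi$ vanishes to first order on $\partial D$ and $X$ hits $\partial D$ transversally), so $\int_0^t Y_r\,dr\to+\infty$ and therefore $h_t\to 0$. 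A little more care shows $h_tY_t\to0$ as well: $h_t Y_t$ is dominated by $h_t\big(C_1(T-t)^{-1}+C_2\varphi^{-2}(X_t)+\a k\big)$, and $e^{-\int_0^t c/(T-r)\,dr}/(T-t)= (T-t)^{c-1}/T^c\to0$ once $c=C_1\ge1$ (which holds since $C_1(\a)=4/(4-(1+\a^{-1})^2)>1$ for $\a>1$), with the analogous elementary estimate handling the $\varphi^{-2}$ contribution along the paths that exit through the boundary. Hence $S_{\a,t}\to0$ almost surely as $t\uparrow T\wedge\tau(x)$.

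To convert ``$S_{\a,t}$ is a local submartingale, bounded, and tends to $0$ at the terminal time'' into $\E[S_{\a,0}]\le0$, I would take a localizing sequence of stopping times $\tau_m\uparrow T\wedge\tau(x)$ making each stopped process a genuine submartingale, so $\E[S_{\a,0}]\le\E[S_{\a,\tau_m}]$ for every $m$; then let $m\to\infty$ and use boundedness of $S_{\a,t}$ on $[0,T\wedge\tau(x)[$ together with the almost-sure convergence $S_{\a,\tau_m}\to0$ and dominated convergence to conclude $\E[S_{\a,0}]\le0$. (One technical wrinkle: the submartingale property was established only ``modulo differentials of local martingales'', so one should check that the drift in $dS_{\a,t}$ is genuinely nonnegative on $[0,T\wedge\tau(x)[$ — this is exactly the content of the preceding lemma — and that the local-martingale part can be localized away; the boundedness of $u$, $\nabla u$, $\Delta u$ on $\bar D\times[0,T]$ makes this routine.) Finally, dividing the resulting inequality through by $u_0=u(x,T)>0$ gives \eqref{E11}, and the various constants are read off from \eqref{E9} and \eqref{E10}.

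\textbf{Main obstacle.} The substantive step is the terminal analysis of $S_{\a,t}$: showing both $h_t\to0$ and $h_tY_t\to0$ as $t\uparrow T\wedge\tau(x)$, uniformly enough (or along the localizing times) to justify dominated convergence. The time-direction blow-up is elementary, but the spatial blow-up near $\partial D$ requires knowing that $\int_0^{\tau(x)}\varphi^{-2}(X_r)\,dr=+\infty$ and controlling $h_t/\varphi^2(X_t)$; this is where the precise hypotheses on $\varphi$ (being $C^2$, positive in $D$, vanishing on the smooth boundary, hence $|\nabla\varphi|\ne0$ there) and the condition $C_1(\a)>1$ — equivalently $\a>1$ — are genuinely used, and it is the only place the argument is not a formal application of optional stopping.
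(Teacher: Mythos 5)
Your overall setup is right and your initial computation of $S_{\a,0}$ correctly identifies that the goal is to show $S_{\a,0}\le 0$, but the route you take to get there differs from the paper's, and the step you yourself flag as the ``main obstacle'' is a genuine gap that the paper sidesteps entirely with a cleaner argument.

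Your plan is to show $S_{\a,t}\to 0$ almost surely as $t\uparrow T\wedge\tau(x)$, use a localizing sequence $\tau_m$ for the local submartingale, and then pass to the limit via dominated convergence. For this you need (a) $h_t Y_t\to 0$ a.s. along the paths, and (b) a uniform (or at least integrable) bound on $|S_{\a,t}|$ over $[0,T\wedge\tau(x)[$. Neither is as easy as you suggest. The time singularity is indeed elementary, but the spatial one is not: $\varphi^{-2}(X_t)$ is random and can spike (when $X$ approaches $\partial D$ and returns) while $\int_0^t\varphi^{-2}(X_r)\,dr$ has not yet grown enough to make $h_t$ small, so the product $h_t\,\varphi^{-2}(X_t)$ is not obviously bounded, let alone tending to $0$. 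Note also that $h_tY_t=-\dot h_t$, so while $\int_0^{T\wedge\tau(x)}h_tY_t\,dt=1-\lim h_t$ is finite, that by itself gives neither a pointwise limit nor a sup bound, and with only ``boundedness of $S_{\a,t}$'' asserted but not established, the dominated convergence step is unsupported. Your claim that ``the analogous elementary estimate'' handles the $\varphi^{-2}$ part is not correct: the deterministic estimate $e^{-\int_0^t C_1/(T-r)\,dr}/(T-t)=(T-t)^{C_1-1}/T^{C_1}$ has no analogue here, since $\varphi(X_t)$ does not decrease monotonically to zero.

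The paper avoids the terminal analysis altogether by arguing by contradiction. Assume $S_{\a,0}>0$ and let $\tau_\a$ be the first time $S_{\a,t}$ hits $0$; since $q_t-\a^{-1}\Delta u_t$ is bounded on $\bar D\times[0,T]$, $u_t\ge m>0$, and $Y_t\to\infty$ as $t\uparrow T\wedge\tau(x)$, the bracket $q_t-\a^{-1}\Delta u_t-nu_tY_t$ tends to $-\infty$, so $\tau_\a<T\wedge\tau(x)$. The decisive observation is that on $[0,\tau_\a]$ the inequality $S_{\a,t}\ge 0$ forces $Y_t\le K/(nm)$ with $K=\max_{\bar D\times[0,T]}(q-\a^{-1}\Delta u)$ and $m=\min_{\bar D\times[0,T]}u$; this automatically bounds $Y_t$, hence $h_tY_t$, and makes the stopped process a genuine submartingale with bounded characteristics. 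Then $S_{\a,0}\le\E[S_{\a,\tau_\a}]=0$, a contradiction. This ``stop at the first zero'' device is precisely what converts the unpleasant singular terminal analysis into a trivial boundedness statement, and it is the idea your proposal is missing.
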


\begin{proof}
Let $$C_1=C_1(\a)\quad\text{and}\quad C_2=C_2(\varphi,\a,n).$$
Consider 
$$
S_{\a,t}=h_t\left(q_t-\a^{-1}\Delta u_t-nu_tY_t\right).
$$
We assume that $S_{\a,0}>0$ and let $\tau_\a$ be the first hitting time of $0$ by~$S_{\a,t}$. 
Then clearly $\tau_\a<T\wedge\tau(x)$, since $$q_t-\a^{-1}\Delta u_t-nu_tY_t$$ 
converges to $-\infty$ as $t$ tends to $T\wedge\tau(x)$.

Let 
\begin{equation*}
K:=\max_{ \bar D\times [0,T]}(q-\alpha^{-1}\Delta u)\quad\text{and}
\quad m=\min_{\bar D\times [0,T]}u.
\end{equation*}
Then, for $t\in [0,\tau_\a]$, we have $S_t\ge 0$, which implies $Y_t\le {K}/nm$.
From this we easily prove that on $[0,\tau_\a]$, 
the process $S_{\alpha,t}$ is a submartingale 
with bounded local characteristics. 
As a consequence, we have 
$$
S_{\a,0}\le \E[S_{\a,\tau_\a}].
$$
Since $S_{\a,\tau_\a}\equiv 0$, this contradicts the assumption $S_{\a,0}>0$. 
Hence we must have $S_{\a,0}\le 0$, which is the desired inequality.\end{proof}

\begin{remark}
 In the case of a global solution of the heat equation on a compact manifold, 
we can take $\varphi$ as a constant and 
then $C_2(\alpha, \varphi, n)=0$. If moreover $k=0$, then one can take 
$C_2=0$ and $C_1=1$, and $S_t$ is a local submartingale. 
This recovers one more time the usual Li-Yau estimate
\begin{equation}
 \label{E12}
\frac{|\nabla u_0|^2}{u_0^2}-\frac{\Delta u_0}{u_0}\le \frac{n}{T}\,.
\end{equation}
A similar reasoning applies for global solutions on complete Riemannian manifolds 
with a lower Ricci curvature bound. 
\end{remark}

\section{Li-Yau inequality with lower order term}
\label{Section5}
\setcounter{equation}0

We keep Assumption \ref{Ass1} and \ref{Ass2} of Section \ref{Section4} as standing assumptions 
for the rest of the paper and 
study now the process
\begin{equation}
 \label{E13}
S_t'=q_t-nu_tZ_t
\end{equation}
where
$$Z_t=C_1\,(T-t)^{-1}+C_2\,\varphi^{-2}(X_t)+C_3$$
with constants $C_1, C_2, C_3>0$ to be specified later. 

Let
\begin{equation}
\label{E18}
C_\varphi=\sup_D\left\{3|\nabla \varphi|^2-\varphi\Delta \varphi\right\}.
\end{equation}
Then we have
\begin{align*}
dS_t'&\ge \left[\frac{q_t^2}{nu_t}-k\frac{|\nabla u_t|^2}{u_t}
-nu_t\left(C_1(T-t)^{-2}+C_2\,C_\varphi\,\varphi^{-4}(X_t)\right)\right]dt\\
&\qquad-C_2\,nd\,[\varphi^{-2}(X),u]_t\\
&\ge \left[\frac{q_t^2}{nu_t}-nu_t\left(\frac{C_1}{(T-t)^2}
+\frac{C_2\,C_\varphi}{\varphi^4(X_t)}
+\frac{k}{n}\left\|\frac{|\nabla u|}{u}\right\|_{\bar D\times [0,T]}^2\right)\right]dt\\
&\qquad-2C_2\,nu_t\left\|\frac{|\nabla u|}{u}\right\|_{\bar D\times [0,T]} 
{\left\| \varphi\nabla \varphi\right\|_D^2}\,{\varphi^{-4}(X_t)}\, dt\\
&\ge \left[\frac{q_t^2}{nu_t}-nu_t\left(\frac{C_1}{(T-t)^2}+\frac{C_2}{\varphi^{4}(X_t)}
\phantom{\sqrt\frac12}\right.\right.\\
&\qquad
\times\left.\left.\left(C_\varphi+2\left\|\frac{|\nabla u|}{u}\right\|_{\bar D\times [0,T]}\left\| 
\varphi\nabla \varphi\right\|_D\right)\right)\right]dt\\
&\qquad-nu_t\,\frac{k}{n}\left\|\frac{|\nabla u|}{u}\right\|_{\bar D\times [0,T]}^2dt.
\end{align*}

\begin{lemma}
Let 
\begin{align}
 \label{E14}\begin{split}
&C_1=1,\quad C_2
=C_\varphi+2\left\|\frac{|\nabla u|}{u}\right\|_{\bar D\times [0,T]}\left\| \varphi\nabla 
\varphi\right\|_\infty\quad\text{and}\\
&C_3=\sqrt{\frac{k}{n}}\left\|\frac{|\nabla u|}{u}\right\|_{\bar D\times [0,T]}.
\end{split}
\end{align}
Then on $\{S_t'\ge 0\}$, the process $S_t'$ has nonnegative drift.
\end{lemma}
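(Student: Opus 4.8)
The plan is to feed the drift inequality for $S_t'$ already established just above the lemma into an elementary quadratic completion. That inequality, after the Cauchy--Schwarz/Young estimate of the covariation term $-C_2\,n\,d[\varphi^{-2}(X),u]_t$ and the crude bound of the curvature term $-k\,|\nabla u_t|^2/u_t$ by the lower Ricci bound $k$ together with the sup-norm $M_0:=\left\|\frac{|\nabla u|}{u}\right\|_{\bar D\times[0,T]}$, reads (modulo differentials of local martingales)
\[
dS_t'\ \ge\ n u_t\left[\frac{q_t^2}{n^2u_t^2}\;-\;\frac{C_1}{(T-t)^2}\;-\;\frac{C_2\bigl(C_\varphi+2M_0\,\|\varphi\nabla\varphi\|_\infty\bigr)}{\varphi^4(X_t)}\;-\;\frac{k}{n}\,M_0^2\right]dt ,
\]
with $C_\varphi$ as in \eqref{E18}. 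For the choices \eqref{E14} the second summand in the bracket is $C_1(T-t)^{-2}$ with $C_1=1$, the third is $C_2^2\varphi^{-4}(X_t)$ since $C_2=C_\varphi+2M_0\|\varphi\nabla\varphi\|_\infty$, and the fourth is $C_3^2$ since $C_3^2=(k/n)M_0^2$; so I would reduce everything to showing that $q_t^2/(n^2u_t^2)\ge C_1(T-t)^{-2}+C_2^2\varphi^{-4}(X_t)+C_3^2$ on the event $\{S_t'\ge0\}$, which (as $nu_t>0$) is exactly the nonnegativity of the bracket.

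On $\{S_t'\ge0\}$ the definition $S_t'=q_t-nu_tZ_t$ gives $q_t\ge nu_tZ_t$, and since $u_t>0$, $n\ge1$ and $Z_t>0$ (all three constants being positive), this forces $q_t>0$, whence $q_t^2/(n^2u_t^2)\ge Z_t^2$. I would then expand $Z_t^2=\bigl(C_1(T-t)^{-1}+C_2\varphi^{-2}(X_t)+C_3\bigr)^2$ and simply drop the three nonnegative cross terms to obtain $Z_t^2\ge C_1^2(T-t)^{-2}+C_2^2\varphi^{-4}(X_t)+C_3^2$. Comparing this with the quantity to be bounded, the two $C_2^2\varphi^{-4}$-terms and the two $C_3^2$-terms cancel, and there remains $C_1^2(T-t)^{-2}-C_1(T-t)^{-2}=C_1(C_1-1)(T-t)^{-2}$, which equals $0$ for $C_1=1$. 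Hence the drift of $S_t'$ is nonnegative on $\{S_t'\ge0\}$.

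The part that needs care is not an estimate but the bookkeeping of constants: the three terms $C_1(T-t)^{-2}$, $C_2\bigl(C_\varphi+2M_0\|\varphi\nabla\varphi\|_\infty\bigr)\varphi^{-4}$ and $(k/n)M_0^2$ in the drift have to be matched, one by one, against the three squares produced by bounding $Z_t^2$ from below. That matching is precisely what pins $C_1$ down to $1$, forces $C_2$ to equal the bracketed $\varphi^{-4}$-coefficient (which is why the sup-norm $M_0$ of $|\nabla u|/u$ over $\bar D\times[0,T]$ enters the definition of $C_2$ self-referentially), and allows any $C_3$ with $C_3^2\ge(k/n)M_0^2$, the value in \eqref{E14} being the borderline one. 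Apart from the covariation and curvature estimates already carried out before the statement, I do not expect any further obstacle.
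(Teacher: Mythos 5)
Your proof is correct and follows essentially the same route as the paper: substitute the choices \eqref{E14} into the drift bound obtained before the lemma so that the bracket reads $\frac{q_t^2}{nu_t}-nu_t\bigl(C_1(T-t)^{-2}+C_2^2\varphi^{-4}+C_3^2\bigr)$, note $Z_t^2\ge C_1^2(T-t)^{-2}+C_2^2\varphi^{-4}+C_3^2$ by dropping cross terms, and on $\{S_t'\ge 0\}$ use $q_t\ge nu_tZ_t>0$ to conclude. The paper phrases the final step as the factorization $\frac{1}{nu_t}\bigl(q_t^2-(nu_tZ_t)^2\bigr)=\frac{S_t'}{nu_t}(q_t+nu_tZ_t)$, whereas you compare $q_t^2/(n^2u_t^2)$ with $Z_t^2$ directly; these are the same computation.
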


\begin{proof}
 We have
$$
Z_t^2\ge C_1^2(T-t)^{-2}+C_2^2\varphi^{-4}(X_t)+C_3^2.
$$
Consequently, under condition \eqref{E14}, 
\begin{align*}
 dS_t'\ge \frac{1}{nu_t}\left(q_t^2-(nu_tZ_t)^2\right)\,dt
=\frac{S_t'}{nu_t}\left(q_t+nu_tZ_t\right)\,dt
\end{align*}
and the right-hand side is nonnegative on $\{S_t'\ge 0\}$.
\end{proof}

Similarly to Theorem~\ref{T1}, we obtain the following result.\goodbreak

\begin{thm}[Local Li-Yau inequality with lower order term]
 \label{T2}
We keep the notation from above, as well as Assumption \textup{\ref{Ass1}} and \textup{\ref{Ass2}} 
from Section \textup{\ref{Section4}}. 
Let $u=u(x,t)$ be a solution of the heat equation 
on $D\times{[0,T]}$ which is positive and continuous on $\bar D\times{[0,T]}$. 
Then 
\begin{equation*}
\frac{|\nabla u_0|^2}{u_0^2}-\frac{\Delta u_0}{u_0}\le \frac{n}{T}
+\frac{nC_\varphi}{\varphi^2(x)}+\left(\sqrt{nk}
+\frac{2n\left\| \varphi\nabla \varphi\right\|_D}{\varphi^2(x)}\right)
\left\|\frac{|\nabla u|}{u}\right\|_{\bar D\times [0,T]}.
\end{equation*}
\end{thm}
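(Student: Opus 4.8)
The plan is to repeat, essentially verbatim, the hitting-time argument used for Theorem~\ref{T1}, now applied to the process $S_t'$ from \eqref{E13} with the constants $C_1,C_2,C_3$ fixed as in \eqref{E14}. The preceding lemma already carries out the analytic work: with these choices $S_t'$ has nonnegative drift on the set $\{S_t'\ge0\}$, so all that remains is to convert this sign information into the pointwise inequality at $t=0$.

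Assume, towards a contradiction, that $S_0'>0$ and set $\tau'=\inf\{t\ge0:S_t'=0\}$. Since $Z_t$ contains the terms $C_1(T-t)^{-1}$ and $C_2\varphi^{-2}(X_t)$, which tend to $+\infty$ as $t\uparrow T\wedge\tau(x)$ while $q_t$ and $u_t$ remain bounded on $\bar D\times[0,T]$, we have $S_t'=q_t-nu_tZ_t\to-\infty$ as $t\uparrow T\wedge\tau(x)$; hence $\tau'<T\wedge\tau(x)$ almost surely and $S_{\tau'}'=0$. On $[0,\tau']$ one has $S_t'\ge0$, i.e. $nm\,Z_t\le nu_tZ_t\le q_t\le K$, where $m=\min_{\bar D\times[0,T]}u>0$ and $K=\max_{\bar D\times[0,T]}q$; this forces $Z_t\le K/(nm)$ on $[0,\tau']$, so both $(T-t)^{-1}$ and $\varphi^{-2}(X_t)$ stay bounded there, meaning $X_t$ is confined to a compact subset of $D$ and $t$ stays bounded away from $T$. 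On such a region $u$, $\nabla u$, $\Delta u$, $\varphi$, $1/\varphi$ and the relevant derivatives are bounded, so the local martingale part of $S_t'$ — produced by the It\^o differentials of $q(X_t,T-t)$, $u(X_t,T-t)$ and $\varphi^{-2}(X_t)$ together with their quadratic covariations — has bounded integrands on $[0,\tau']$ and is a genuine martingale. Combined with the nonnegative drift from the preceding lemma, $(S_{t\wedge\tau'}')$ is a true submartingale, whence $S_0'\le\E[S_{\tau'}']=0$, contradicting $S_0'>0$. Therefore $S_0'\le0$.

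It then remains only to unwind $S_0'\le0$. Evaluating at $t=0$ (so $h_0=1$, $Z_0=\tfrac1T+\tfrac{C_2}{\varphi^2(x)}+C_3$) gives
\[
\frac{|\nabla u_0|^2}{u_0}-\Delta u_0=q_0\le nu_0Z_0=nu_0\Bigl(\tfrac1T+\tfrac{C_2}{\varphi^2(x)}+C_3\Bigr),
\]
and dividing by $u_0>0$ and inserting $C_2=C_\varphi+2\bigl\|\tfrac{|\nabla u|}{u}\bigr\|_{\bar D\times[0,T]}\|\varphi\nabla\varphi\|_D$ together with $nC_3=\sqrt{nk}\,\bigl\|\tfrac{|\nabla u|}{u}\bigr\|_{\bar D\times[0,T]}$ (using $n\sqrt{k/n}=\sqrt{nk}$) yields exactly the asserted bound. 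The one genuinely delicate point — precisely as in the proof of Theorem~\ref{T1} — is the passage from ``nonnegative drift on $\{S_t'\ge0\}$'' to ``$(S_{t\wedge\tau'}')$ is a true submartingale and not merely a local one''; this is what forces one to invoke the boundedness of $u$ and $q$ on $\bar D\times[0,T]$ so that $Z_t\le K/(nm)$ on $[0,\tau']$ keeps the local-martingale integrands bounded. Everything else is the same bookkeeping already carried out in the $\Ric\ge0$ and elliptic cases above.
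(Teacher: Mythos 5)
Your proof is correct and is essentially the paper's own argument: the paper only says that Theorem~\ref{T2} follows ``similarly to Theorem~\ref{T1},'' and you have carried out exactly that hitting-time argument for $S_t'$, using the bound $Z_t\le K/(nm)$ on $[0,\tau']$ to keep $(T-t)^{-1}$ and $\varphi^{-2}(X_t)$ bounded, hence upgrading the local submartingale to a true one, and then unwinding $S_0'\le0$ with the constants from \eqref{E14}. (One harmless slip: $S_t'$ in \eqref{E13} carries no factor $h_t$, so the parenthetical ``$h_0=1$'' is superfluous; the rest of the arithmetic is right.)
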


\section{Local gradient estimates of Hamilton type}
\label{Section6}
\setcounter{equation}0

We keep Assumptions \ref{Ass1} and \ref{Ass2} 
of Section \textup{\ref{Section4}} and study now the process
\begin{equation}
\label{E15}
S_t=\frac{|\nabla u_t|}{2u_t}-u_t(1-\log u_t)^2 Z_t,
\end{equation}
where $$Z_t=\frac{C_1}{(T-t)}+\frac{C_2}{\varphi^{2}(X_t)}+C_3$$
for some constants $C_1$, $C_2$, $C_3>0$. 

Assume that $0<u\le e^{-3}$ (this assumption will be removed in Theorem~\ref{T3} 
through replacing $u$ by $e^{-3}\,u/\|u\|_{\bar D\times [0,T]}$).
Let $c_\varphi(x)$ again be given by \eqref{E5}. 
Then, denoting 
$$ g(t,x)=\frac{|\nabla u|^2}{u}(t,x)\quad\text{and}\quad g_t=g(T-t,X_t),$$
and using the fact that 
\begin{align*}
&\nabla\left(u(1-\log u)^2\right)=(\log^2 u-1)\nabla u 
\intertext{and} 
&d\left(u_t(1-\log u_t)^2\right)\mequal g_t\log u_t\,dt,
\end{align*}
we get
\begin{align*}
dS_t&\ge -k\, \frac12\,g_t\, dt\\
&\quad -u_t(1-\log u_t)^2
\Big(C_1\,(T-t)^2+C_2\,c_\varphi(X_t)\varphi^{-4}(X_t)\Big)\,dt\\
&\quad-2\log u_t\, \frac12\,g_tZ_t\, dt\\
&\quad-2C_2(1-\log u_t)(1+\log u_t)\varphi^{-3}(X_t)\,\langle \nabla u_t,\nabla \varphi(X_t)\rangle\,dt.
\end{align*}
Now from $u\le e^{-3}$ we get $$-2\log u_t\ge 3(1-\log u_t)/2.$$ 
This together with $|1+\log u_t|\le 1-\log u_t$ yields (modulo differentials of local martingales)
\begin{align*}
dS_t&\ge (1-\log u_t)\Bigg\{\left[\left(\frac32 Z_t-k\right)
 \frac12\,g_t\vphantom{\frac12}\right.\\
&\quad\left.\vphantom{\frac12}-u_t(1-\log u_t)\left(C_1(T-t)^2
+C_2c_\varphi(X_t)\varphi^{-4}(X_t)\right)\right] dt\\
&\quad-C_2\left[(1-\log u_t)\,2\sqrt 2\, \varphi^{-2}(X_t)\,|\nabla 
\varphi(X_t)|\,\sqrt{u_t}\, \varphi^{-1}(X_t)\,\sqrt{\frac{g_t}2\,}\,\right]dt\Bigg\}\\
&\ge (1-\log u_t)\Bigg\{\left[\left(\frac32 Z_t-k\right)\frac12\,g_t\vphantom{\frac12}\right.\\
&\quad\left.\vphantom{\frac12}-u_t(1-\log u_t)
 \left(C_1(T-t)^2+C_2c_\varphi(X_t)\varphi^{-4}(X_t)\right)\right] dt\\
&\quad-\left[(1-\log u_t)^2u_t\,4C_2\varphi^{-4}(X_t)\,|\nabla \varphi(X_t)|^2
+\frac14 g_tC_2\varphi^{-2}(X_t)\right] dt\Bigg\}\\
&\ge (1-\log u_t)\biggl[\left( Z_t-k\right)\frac12\,g_t-u_t(1-\log u_t)^2\\
&\quad \times\Big(C_1(T-t)^2+C_2\left(c_\varphi(X_t)+4|\nabla \varphi(X_t)|^2\right)\varphi^{-4}(X_t)\Big)\bigg]dt.
\end{align*}
Letting 
\begin{equation}
\label{E45}
C_1=1,\quad C_2=\sup_D\left\{c_\varphi+4|\nabla \varphi|^2\right\}\quad\hbox{and}\quad C_3=k,
\end{equation}
 we get 
\begin{align*}
dS_t&\ge (1-\log u_t)\left[
(Z_t-k)\frac12\,g_t-u_t(1-\log u_t)^2(Z_t-k)^2
\right]\,dt\\
&\ge (1-\log u_t)(Z_t-k)\left[
\frac12\,g_t-u_t(1-\log u_t)^2Z_t
\right]\,dt.
\end{align*}
This proves that $S_t$ has nonnegative drift on $\{S_t\ge 0\}$. 
On the other hand, $S_t$ converges to $-\infty$ as $t\to T\wedge \tau(x)$. 

Similarly to Theorem~\ref{T1}, we obtain the following result.

\begin{thm}[Local Li-Yau inequality of Hamilton type]
\label{T3} 
We keep the assumptions as above.
Assume that $u$ is a solution of the heat equation on $D\times{[0,T]}$ which 
is positive and continuous on $\bar D\times{[0,T]}$. Then 
\begin{equation*}
\left|\frac{\nabla u_0}{u_0}\right|^2
\le 2\left(\frac1T+\frac{\sup_D\left\{7|\nabla\varphi|^2-\varphi\Delta \varphi\right\}}{\varphi^2(x)}+k\right)
\left(4+\log \frac{\|u\|_{\bar D\times [0,T]}}{u_0}\right)^2\!
\end{equation*}
where $\varphi$ is as above.
\end{thm}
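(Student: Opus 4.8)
The plan is to repeat the scheme of the proof of Theorem~\ref{T1}, now with the process $S_t$ of~\eqref{E15} and the constants~\eqref{E45}, using the two facts established just above the statement: that $S_t$ has nonnegative drift on $\{S_t\ge0\}$, and that $S_t\to-\infty$ as $t\uparrow T\wedge\tau(x)$.

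First I would dispose of the standing normalization $0<u\le e^{-3}$. Given an arbitrary $u$ as in the statement, apply the $e^{-3}$\nobreakdash-normalized estimate to $\tilde u:=e^{-3}\,u/\|u\|_{\bar D\times[0,T]}$, which is again a positive solution of the heat equation, continuous on $\bar D\times[0,T]$, bounded by $e^{-3}$, and satisfies both $\nabla\tilde u/\tilde u=\nabla u/u$ and $1-\log\tilde u_0=4+\log\bigl(\|u\|_{\bar D\times[0,T]}/u_0\bigr)$; since the constants~\eqref{E45} depend only on $\varphi$ and $k$, the estimate for $\tilde u$ is exactly the claimed inequality. Hence it suffices to prove, for $u\le e^{-3}$,
$$\left|\frac{\nabla u_0}{u_0}\right|^2\le 2\,(1-\log u_0)^2\left(\frac1T+\frac{C_2}{\varphi^2(x)}+k\right),\qquad C_2=\sup_D\bigl\{7|\nabla\varphi|^2-\varphi\Delta\varphi\bigr\}.$$

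For this, fix $x\in D$ and suppose, for contradiction, that $S_0>0$. Because $S_t$ tends to $-\infty$ at the terminal time, the first hitting time $\tau_0$ of $0$ by $S_\bull$ satisfies $\tau_0<T\wedge\tau(x)$ and $S_{\tau_0}=0$. On $[0,\tau_0]$ we have $S_t\ge0$, i.e.
$$u_t(1-\log u_t)^2 Z_t\ \le\ \tfrac12\,g_t\ =\ \frac{|\nabla u_t|^2}{2u_t}\,;$$
on the relatively compact part of $D$ visited by $X$ before the exit time $\tau(x)$ the functions $u$ and $\nabla u$ are bounded and $u$ is bounded below, so the right-hand side is bounded and therefore $Z_t$ --- hence both $(T-t)^{-1}$ and $\varphi^{-2}(X_t)$ --- is bounded on $[0,\tau_0]$. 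Thus $\varphi(X_t)\ge\delta>0$ there, so $X$ stays up to time $\tau_0$ in the compact subset $\{\varphi\ge\delta\}$ of $D$, away from $\partial D$, and all coefficients in the It\^o decomposition of $S_t$ (among them $u_t$, $\nabla u_t$, $\Delta u_t$, $\varphi^{-1}(X_t)$ and its first two derivatives) are bounded on $[0,\tau_0]$. This turns the formal ``modulo differentials of local martingales'' inequality for $dS_t$ into a genuine semimartingale decomposition with bounded local characteristics, so that $(S_{t\wedge\tau_0})_{t\in[0,T]}$ is a true submartingale with nonnegative drift. Consequently $0<S_0\le\E[S_{\tau_0}]=0$, a contradiction. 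Therefore $S_0\le0$; recalling~\eqref{E15} and~\eqref{E45}, this reads $\frac{|\nabla u_0|^2}{2u_0}\le u_0(1-\log u_0)^2\bigl(\tfrac1T+\tfrac{C_2}{\varphi^2(x)}+k\bigr)$, and dividing by $u_0$ gives the displayed intermediate inequality, whence the reduction above finishes the proof. The main obstacle is precisely this middle step: promoting the formal drift computation to a rigorous submartingale statement on $[0,\tau_0]$. Everything hinges on using $S_t\ge0$ to confine the Brownian motion to a compact part of $D$ --- the analogue of the bound $Y_t\le K/nm$ in the proof of Theorem~\ref{T1} --- after which boundedness of all the relevant coefficients, and hence of the local characteristics, is automatic.
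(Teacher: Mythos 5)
Your proposal is correct and follows essentially the same route the paper intends: it establishes the drift positivity of $S_t$ on $\{S_t\ge0\}$ with the constants \eqref{E45}, invokes the hitting-time contradiction argument exactly as in the proof of Theorem~\ref{T1} (with the confinement bound $u_t(1-\log u_t)^2Z_t\le\frac12 g_t$ playing the role of $Y_t\le K/nm$), and undoes the normalization $u\le e^{-3}$ by passing to $e^{-3}u/\|u\|_{\bar D\times[0,T]}$, which turns $(1-\log u_0)^2$ into $\bigl(4+\log(\|u\|_{\bar D\times[0,T]}/u_0)\bigr)^2$. The paper merely says ``Similarly to Theorem~\ref{T1}, we obtain the following result''; you have spelled out precisely the details that phrase elides, and your filling-in is sound.
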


\section{Explicit upper bounds}
\label{Section7}
\setcounter{equation}0

The estimates in Theorems \ref{T1}, \ref{T2} and \ref{T3} have been given in terms of 
a function $\varphi\in C^2(\bar D)$ such that $\varphi>0$ in $D$ and $\varphi|\partial D=0$. 
To specify the constants an explicit choice for $\varphi$ has to be done.

We fix $x\in D$ and let $\d_x=\rho(x,\partial D)$ where $\rho$ denotes the Riemannian distance. 
We replace $D$ by the ball $B=B(x,\d_x)$ and 
consider on $B$
\begin{equation}
 \label{E20}
\varphi(y)=\cos\frac{\pi \rho(x,y)}{2\d_x}.
\end{equation}
Clearly $\varphi(x)=1$, $\varphi$ is nonnegative and bounded by $1$, and $\varphi$ vanishes on $\partial B$.

It is proven in~\cite{Thalmaier-Wang:98} that 
\begin{equation}
 \label{E19}
d\varphi^{-2}(X_t)\le \frac12 \Delta (\varphi^{-2})(X_t)\,dt 
\end{equation}
where by convention $\Delta \varphi^{-2}=0$ at points where $\varphi^{-2}$ is not differentiable. 
Moreover, since the time spent by $X_t$ on the cut-locus of $x$ is a.s. zero, the differential of 
the brackets $[\varphi(X_t),u_t]$ may be taken as~$0$ at points where $\varphi^{-2}$ 
is not differentiable. As a consequence, all estimates in 
Theorems \ref{T1}, \ref{T2} and \ref{T3} remain valid with $\varphi$ defined by \eqref{E20}. 

We are now going to derive explicit expressions for the constants. 
To this end we observe that
\begin{equation*}
\|\nabla \varphi\|_B\le \frac{\pi}{2\d_x}.
\end{equation*}
From~\cite{Thalmaier-Wang:98}, we get 
\begin{equation*}
-\Delta \varphi\le \frac{\pi\sqrt{k(n-1)}}{2\d_x}+\frac{\pi^2 n}{4\d_x^2}
\end{equation*}
which gives for any $\b>0$,
\begin{equation*}
-\Delta \varphi\le \frac{\pi^2(1+\b)n}{4\d_x^2}+\frac{k}{4\b}.
\end{equation*}
This yields 
\begin{equation*}
C_\varphi\le \frac{\pi^2\left[(1+\b)n+3\right]}{4\d_x^2}+\frac{k}{4\b},
\end{equation*}
\begin{equation*}
C_{\varphi,\a,n}\le \frac{\pi^2\left[(1+\b+\a)n+3\right]}{4\d_x^2}+\frac{k}{4\b},
\end{equation*}
\begin{equation*}
\sup_D\left\{7|\nabla \varphi|^2-\varphi\Delta \varphi\right\}\le 
\frac{\pi^2\left[(1+\b)n+7\right]}{4\d_x^2}+\frac{k}{4\b}.
\end{equation*}
Finally we replace $\a$ by $a=1+\a^{-1}$ to obtain from Theorems \ref{T1}, \ref{T2} and \ref{T3} 
the following explicit upper bounds.

\begin{thm}[Local Li-Yau inequalities with explicit constants]
 \label{C1}
Let $u$ be a solution of the heat equation 
on $D\times{[0,T]}$ where $D$ is a relatively compact open subset of a Riemannian manifold $M$. 
Assume that $u$ is positive and continuous on $\bar D\times{[0,T]}$. 
Furthermore let $k$ be a lower bound for the Ricci curvature on the domain $D$. 

Fix $x\in D$ and let $a\in{]1,2[}$. 
For any $\b>0$ we have 
\begin{align*}
\left|\frac{\nabla u_0}{u_0}\right|^2-a\frac{\Delta u_0}{u_0}
&\le \frac{4n}{(4-a^2)T}+\frac{\pi^2n\left[\left(1+\b+\frac{1}{a-1}\right)n+3\right]}{(4-a^2)\d_x^2}\\
&\quad+\left(\frac{1}{(4-a^2)\b}+\frac{1}{a-1}\right) nk,\\
\left|\frac{\nabla u_0}{u_0}\right|^2-\frac{\Delta u_0}{u_0}&\le \frac{n}{T}
+\frac{n\pi^2\left[(1+\b)n+3\right]}{4\d_x^2}+\frac{nk}{4\b}\\
&\quad+\sqrt{nk}\left\|\frac{|\nabla u|}{u}\right\|_{\bar D\times [0,T]}
+\frac{n\pi}{\d_x}\left\|\frac{|\nabla u|}{u}\right\|_{\bar D\times [0,T]}
\intertext{and}
\left|\frac{\nabla u_0}{u_0}\right|^2
&\le 2\left(\frac1T+\frac{\pi^2\left[(1+\b)n+7\right]}{4\d_x^2}+\left(\frac{1}{4\b}+1\right)k\right)\\
&\qquad\qquad\qquad\qquad\times\left(4+\log \frac{\|u\|_{\bar D\times [0,T]}}{u_0}\right)^2
\end{align*}
where $n$ denotes the dimension of $M$ and $\d_x$ the Riemannian distance of $x$ to the boundary of $D$.
Recall that $u_0=u(\newdot,T)$ and $\Delta u_0=\Delta u(\newdot,T)$.

\end{thm}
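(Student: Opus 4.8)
The plan is to specialize the three abstract local estimates of Theorems \ref{T1}, \ref{T2} and \ref{T3} to the explicit test function $\varphi$ defined in \eqref{E20} on the ball $B=B(x,\d_x)$, and then to substitute the elementary bounds on $|\nabla\varphi|$ and $\Delta\varphi$ recorded just above the statement. First I would justify that replacing the original domain $D$ by the smaller ball $B$ is legitimate: since $u$ is positive and continuous on $\bar D\times[0,T]$, its restriction to $\bar B\times[0,T]$ satisfies Assumption \ref{Ass1} with $D$ replaced by $B$, and the lower Ricci bound $k$ for $D$ is a fortiori a lower bound on $B$. The only subtlety is that $\varphi$ from \eqref{E20} is merely $C^2$ away from the cut locus of $x$; this is handled by the argument already quoted from \cite{Thalmaier-Wang:98}, namely \eqref{E19} together with the observation that the quadratic covariation $d[\varphi(X_t),u_t]$ may be set to $0$ on the cut locus (which $X$ visits only on a set of times of measure zero). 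Hence all the drift computations leading to Theorems \ref{T1}--\ref{T3} go through verbatim with this $\varphi$, and $\varphi^2(x)=1$ so the terms $\varphi^{-2}(x)$ become simply $\d_x^{-2}$.

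Next I would feed in the curvature-dependent bounds on the geometric constants. From $\|\nabla\varphi\|_B\le \pi/(2\d_x)$ and the Laplacian comparison estimate $-\Delta\varphi\le \pi\sqrt{k(n-1)}/(2\d_x)+\pi^2 n/(4\d_x^2)$, Young's inequality in the form $\pi\sqrt{k(n-1)}/(2\d_x)\le \pi^2\b n/(4\d_x^2)+k/(4\b)$ (valid for every $\b>0$) yields
\[
-\Delta\varphi\le \frac{\pi^2(1+\b)n}{4\d_x^2}+\frac{k}{4\b}.
\]
Combining this with $\varphi\le 1$ and $|\nabla\varphi|^2\le \pi^2/(4\d_x^2)$ gives the three displayed bounds on $C_\varphi$, on $C_{\varphi,\a,n}$ (which carries an extra $\a n|\nabla\varphi|^2$, contributing the $+\a$ inside the bracket), and on $\sup_D\{7|\nabla\varphi|^2-\varphi\Delta\varphi\}$. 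For the second inequality of the theorem one also needs $\|\varphi\nabla\varphi\|_D\le \|\nabla\varphi\|_B\le \pi/(2\d_x)$, so that the term $2n\|\varphi\nabla\varphi\|_D/\varphi^2(x)$ in Theorem \ref{T2} is bounded by $n\pi/\d_x$.

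Finally I would perform the change of parameter $a=1+\a^{-1}$, so that $\a>1$ corresponds to $a\in{]1,2[}$, and $\a^{-1}=a-1$, $\a=1/(a-1)$, while $4-(1+\a^{-1})^2=4-a^2$; hence $C_1(\a)=4/(4-a^2)$ and $C_2(\varphi,\a,n)=C_1(\a)\cdot C_{\varphi,\a,n}$. Substituting the bound on $C_{\varphi,\a,n}$ into \eqref{E11} produces the first displayed inequality, with the $nk$ coefficient $1/((4-a^2)\b)+1/(a-1)$ arising from the $k/(4\b)$ inside $C_2$ (multiplied by $nC_1(\a)/(4-a^2)\cdot 4$, i.e. $n/((4-a^2)\b)$ after simplification) plus the $n\a k=nk/(a-1)$ term already present in \eqref{E11}. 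The second and third displayed inequalities follow the same way from Theorems \ref{T2} and \ref{T3} respectively, using $k$ as the value of $C_3$ there and $C_\varphi$, resp.\ $\sup_D\{7|\nabla\varphi|^2-\varphi\Delta\varphi\}$, in place of the abstract constants. The only place requiring care — the main "obstacle," though it is more bookkeeping than difficulty — is tracking the algebra of the coefficients through the substitution $a=1+\a^{-1}$ and checking that the factors of $4/(4-a^2)$ distribute correctly across both the $\d_x^{-2}$ term and the $k/(4\b)$ piece hidden inside $C_2$; once that arithmetic is done the three estimates drop out immediately.
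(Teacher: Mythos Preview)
Your proposal is correct and follows exactly the approach of the paper: specialize Theorems~\ref{T1}, \ref{T2}, \ref{T3} to the ball $B(x,\d_x)$ with the cosine test function \eqref{E20}, invoke \cite{Thalmaier-Wang:98} for the cut-locus issue and the Laplacian comparison bound, apply Young's inequality with parameter $\b$, and perform the substitution $a=1+\a^{-1}$. You have in fact spelled out more of the bookkeeping (e.g.\ the bound $\|\varphi\nabla\varphi\|_B\le\pi/(2\d_x)$ needed for the second inequality, and the tracking of the $k/(4\b)$ contribution through $C_2(\varphi,\a,n)$) than the paper does explicitly.
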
\goodbreak

%%%%%%%%%%%%%%%%%%%%%%%%%%%%%%%%%%%%%%%%%%%%%%%%%%%%%%%%%%%%%%%%%%%%%%%%%
%   R E F E R E N C E S 
%%%%%%%%%%%%%%%%%%%%%%%%%%%%%%%%%%%%%%%%%%%%%%%%%%%%%%%%%%%%%%%%%%%%%%%%%

\end{document}